\newtheorem{theorem}{Theorem}[section]
\newtheorem{proposition}[theorem]{Proposition}
\newtheorem{example}[theorem]{Example}
\theoremstyle{definition}
\newtheorem{definition}[theorem]{Definition}
\theoremstyle{remark}
\newtheorem{remark}[theorem]{Remark}
\numberwithin{equation}{section}
\newcommand{\K}{\mathbb K}
\newcommand{\A}{\mathcal{A}}
\newcommand{\g}{\mathfrak{g}}
\newcommand{\sll}{\mathfrak{sl}_2(\mathbb{K})}
\begin{document}

\title[  Hom-dendriform algebras and Rota-Baxter Hom-algebras]
{  Hom-dendriform algebras and Rota-Baxter Hom-algebras}
\author{Abdenacer MAKHLOUF  }
\address{Abdenacer Makhlouf, Universit\'{e} de Haute Alsace,  Laboratoire de Math\'{e}matiques, Informatique et Applications,
4, rue des Fr\`{e}res Lumi\`{e}re F-68093 Mulhouse, France}%
\email{Abdenacer.Makhlouf@uha.fr}

\thanks {
}

 \subjclass[2000]{16W20,17D25}
\keywords{Hom-Lie algebra, Hom-associative algebra, Rota-Baxter operator, Rota-Baxter algebras, Hom-preLie algebra, Hom-dendriform algebra}
%\date{}
%
%\dedicatory{}%
%\commby{}%
% ----------------------------------------------------------------
\begin{abstract}
The aim of this paper is to introduce and study  Rota-Baxter Hom-algebras. Moreover we introduce a generalization of the dendriform algebras and tridendriform algebras  by twisting the identities by mean of a linear map. Then we explore the connections between these categories of Hom-algebras.
\end{abstract}
\maketitle
% ----------------------------------------------------------------

\section*{Introduction}
The study of nonassociative algebras was originally motivated by
certain problems in physics and other branches of mathematics. The Hom-algebra structures
arose first in quasi-deformation of Lie algebras of vector fields.
Discrete modifications of vector fields via twisted derivations lead
to Hom-Lie and quasi-Hom-Lie structures in which the Jacobi
condition is twisted. The first examples of $q$-deformations, in which the derivations are replaced by $\sigma$-derivations,
concerned the Witt and Virasoro algebras, see for example
\cite{AizawaSaito,ChaiElinPop,ChaiKuLukPopPresn,ChaiIsKuLuk,ChaiPopPres,CurtrZachos1,
DaskaloyannisGendefVir,Kassel1, LiuKeQin,Hu}. A  general study and
construction of Hom-Lie algebras are considered in
\cite{HLS,LS1,LS2} and a more general framework bordering color and
super Lie algebras was introduced
 in \cite{HLS,LS1,LS2,LS3}. In the subclass of Hom-Lie
algebras skew-symmetry is untwisted, whereas the Jacobi identity is
twisted by a single linear map and contains three terms as in Lie
algebras, reducing to ordinary Lie algebras when the twisting linear
map is the identity map.

The notion of Hom-associative algebras generalizing associative
algebras to a situation where associativity law is twisted by a
linear map was introduced  in \cite{MS}, it turns out  that the
commutator bracket multiplication defined using the multiplication
in a Hom-associative algebra leads naturally to Hom-Lie algebras.
This provided a different  way of constructing Hom-Lie algebras. Also in \cite{MS}, the
Hom-Lie-admissible algebras and more general $G$-Hom-associative
algebras with subclasses of Hom-Vinberg and Hom-preLie algebras,
generalizing to the twisted situation Lie-admissible algebras,
$G$-associative algebras, Vinberg and preLie algebras respectively are introduced and it is shown that for these classes of algebras the operation of taking
commutator leads to Hom-Lie algebras as well. The enveloping
algebras of Hom-Lie algebras were discussed in \cite{Yau:EnvLieAlg}.
The fundamentals of the formal deformation theory and associated
cohomology structures for Hom-Lie algebras have been considered
initially  in \cite{HomDeform} and completed in \cite{AEM}. Simultaneously, in \cite{Yau:HomolHomLie}
elements of homology for Hom-Lie algebras  have been developed.
In \cite{HomHopf} and \cite{HomAlgHomCoalg}, the theory of
Hom-coalgebras and related structures are developed. Further development could be found in \cite{Mak-ElHamd:defoHom-alter,Mak:HomAlterna2010,Mak:Almeria,AmmarMakhloufJA2010,AM2008,Mak:HomAlterna2010,Canepl2009,JinLi,Yau:comodule}.

Dendriform algebras were introduced by Loday in \cite{Loday1}. Dendriform algebras are algebras with two operations, which dichotomize the notion of associative algebra.  The motivation to introduce these algebraic structures with two generating operations comes from $K$-theory. It turned out later that they are connected to several areas in mathematics and physics, including  Hopf algebras, homotopy Gerstenhaber algebra, operads, homology, combinatorics 	and quantum field theory where they  occur  in the theory of renormalization of Connes and Kreimer. Later
the notion of tridendriform algebra were introduced by Loday and Ronco in their study of polytopes and Koszul duality, see \cite{Loday-Ronco04}. A tridendriform algebra is a vector space equipped with 3 binary operations  satisfying seven relations.

The Rota-Baxter operator has appeared in a wide range of areas in pure and applied mathematics. The paradigmatic example of Rota-Baxter operator concerns the integration by parts formula of continuous functions. The algebraic formulation of Rota-Baxter algebra appeared first in G. Baxter's works in probability study of fluctuation theory. This algebra was intensively studied by G.C. Rota in connection with combinatorics. Another connection with Rota-Baxter algebra with mathematical physics was found by A. Connes and D. Kreimer in their Hopf algebra approach to renormalization  of quantum field theory. This seminal work gives rise to an important development including Rota-Baxter algebras and their connections to other algebraic structure (see \cite{Aguiar0,Aguiar,KEF1,KEF-Guo1,KEF-Manchon_JA2009,KEF-Manchon2009,KEF-Bondia-Patras,KEF-Machon-Patras,Guo1,Guo2,Guo3,Guo4,Li-Hou-Bai07}).

The purpose of this paper is  to study Rota-Baxter Hom-algebras. We introduce Hom-dendriform and Hom-tridendriform algebras and then explore the connections between all these categories of Hom-algebras. We summarize in the first section the basis of Hom-algebras and recall the definitions and some properties of Hom-associative, Hom-Lie and Hom-preLie algebras. In Section 2, we introduce the notions of Hom-dendriform algebras and Hom-tridendriform algebras and provide  constructions of these algebras and their relationships with Hom-preLie algebras. Section 3 is dedicated to Hom-associative Rota-Baxter algebras, we extend the classical notion of associative Rota-Baxter algebra and show some  constructions. In Section 4 we establish functors between the category of Hom-associative Rota-Baxter algebras and the categories of Hom-preLie, Hom-dendriform and Hom-tridendriform algebras. In Section 5 we discuss the Rota-Baxter operator in the context of Hom-nonassociative algebras mainly for Hom-Lie algebras.
%In the last section we consider the free Hom-associative Rota-Baxter algebras.

\section{ Hom-associative, Hom-Lie   and Hom-preLie algebras} \label{sect1}

In this section  we
summarize the definitions and some properties of Hom-associative, Hom-Lie
 and Hom-preLie algebraic structures  (see \cite{MS}) generalizing
the well known  associative, Lie and preLie  algebras by twisting the identities with a linear map.

Throughout the article we let  $\mathbb{K}$ be an algebraically
closed field of characteristic $0$.  We mean by a Hom-algebra a triple $(A,\mu,\alpha )$ consisting of a vector space $A$ on which $\mu : A\times A \rightarrow A$ is a bilinear map (or  $\mu : A\otimes A \rightarrow A$ is a linear map) and  a $\alpha :A \rightarrow A$ is linear map.
A Hom-algebra $(A, \mu, \alpha)$  is said to be
\emph{multiplicative} if  $\forall x,y\in A$ we have $\alpha([x,y])=[\alpha (x),\alpha (y)]$.

Let $\left( A,\mu,\alpha \right) $ and
$A^{\prime }=\left( A^{\prime },\mu ^{\prime
},\alpha^{\prime }\right) $ be two Hom-algebras of a given type. A linear map
$f\ :A\rightarrow A^{\prime }$ is
a \emph{morphism of Hom-algebras} if%
$$
\mu ^{\prime }\circ (f\otimes f)=f\circ \mu \quad \text{
and } \qquad f\circ \alpha=\alpha^{\prime }\circ f.
$$

In particular, Hom-algebras $\left( A,\mu,\alpha \right) $ and
$\left( A,\mu ^{\prime },\alpha^{\prime }\right) $ are isomorphic if
there exists a
bijective linear map $f\ $such that%
$
\mu =f^{-1}\circ \mu ^{\prime }\circ (f\otimes f)$
and $\alpha= f^{-1}\circ \alpha^{\prime }\circ
f.
$

A subspace  $H$ of $A$ is said to be a \emph{subalgebra} if for $x,y\in H$  we have $\mu (x,y)\in H$ and $\alpha (x)\in H.$ A subspace  $I$ of $A$ is said to be an \emph{\emph{ideal}} if for $x\in I$ and $y\in A$ we have $\mu (x,y)\in I$ and $\alpha (x)\in I.$

In all the examples involving  the unspecified products are either given by skewsymmetry or equal to zero.

\subsection{Hom-associative algebras}
The Hom-associative
algebras were  introduced
by the author and Silvestrov in \cite{MS}.
\begin{definition}[Hom-associative algebra]
A \emph{Hom-associative algebra}  is a triple $( A, \cdot,
\alpha) $ consisting of a vector space  $A$ on which   $\cdot :A\otimes A\rightarrow  A$
and $\alpha: A \rightarrow A$ are  linear maps, satisfying
\begin{equation}\label{Hom-ass}
\alpha(x)\cdot (y\cdot z)= (x\cdot y)\cdot \alpha (z).
\end{equation}
\end{definition}

\begin{example}\label{example1ass}
Let $\{x_1,x_2,x_3\}$  be a basis of a $3$-dimensional linear space
$A$ over $\K$. The following multiplication $\cdot$ and linear map
$\alpha$ on $A$ define Hom-associative algebras over $\K^3${\rm :}
$$
\begin{array}{ll}
\begin{array}{lll}
  x_1\cdot x_1&=& a\ x_1, \ \\
 x_1\cdot  x_2&=& x_2 \cdot  x_1=a\ x_2,\\
x_1 \cdot  x_3 &=& x_3\cdot  x_1=b\ x_3,\\
 \end{array}
 & \quad
 \begin{array}{lll}
x_2\cdot  x_2 &=& a\ x_2, \ \\
 x_2\cdot   x_3&=& b\ x_3, \ \\
 x_3\cdot  x_2&=&  x_3\cdot x_3=0,
  \end{array}
\end{array}
$$

$$  \alpha (x_1)= a\ x_1, \quad
 \alpha (x_2) =a\ x_2 , \quad
   \alpha (x_3)=b\ x_3,
$$
where $a,b$ are parameters in $\K$. The algebras are not associative
when $a\neq b$ and $b\neq 0$, since
$$ (x_1\cdot  x_1)\cdot x_3-  x_1\cdot
(x_1\cdot x_3)=(a-b)b x_3.$$
\end{example}

\begin{example}[Polynomial Hom-associative algebra \cite{Yau:homology}]
Consider the polynomial algebra $\A=\K [x_1,\cdots x_n]$ in $n$
variables. Let $\alpha$ be an algebra endomorphism of $\A$ which is
uniquely determined by the $n$ polynomials $\alpha (x_i)=\sum
{\lambda_{i;r_1,\cdots,r_n}x^{r_1}_1,\cdots x^{r_n}_n}$ for $1\leq i
\leq n$. Define $\mu$ by
\begin{equation}
\mu (f,g)=f(\alpha (x_1),\cdots \alpha (x_n))g(\alpha (x_1),\cdots
\alpha (x_n))
\end{equation}
for $f,g$ in $\A$. Then,  $(\A ,\mu,\alpha)$ is a Hom-associative algebra.

%\textbf{Voir si  il ya une integration par parties comme RB operateur????}
\end{example}
\begin{example}[\cite{Yau:comodule}]
Let $\A=(A,\mu,\alpha )$ be a Hom-associative algebra. Then
$(\mathcal{M}_n(\A),\mu',\alpha' )$, where $\mathcal{M}_n(\A)$ is
the vector space of $n\times n$ matrix with entries in $A$, is also
a Hom-associative algebra in which the multiplication $\mu'$ is
given by matrix multiplication and $\mu$,  and $\alpha'$ is given by
$\alpha$ in each entry.

\end{example}

\subsection{Hom-Lie algebras}The  notion of Hom-Lie algebra  was introduced by Hartwig, Larsson and
Silvestrov in \cite{HLS,LS1,LS2} motivated initially by examples of deformed
Lie algebras coming from twisted discretizations of vector fields.
In this article, we follow notations and a slightly more general definition
of Hom-Lie algebras from \cite{MS}.
\begin{definition}[Hom-Lie algebra] \label{def:HomLie}
A \emph{Hom-Lie algebra} is a triple $(\g, [\ ,
\ ], \alpha)$ consisting of a vector space $\g$ on which  $[\ , \ ]: \g\times \g \rightarrow \g$ is
a bilinear map and $\alpha: \g \rightarrow \g$
 a linear map
 satisfying
\begin{eqnarray} & [x,y]=-[y,x],
\quad {\text{(skew-symmetry)}} \\ \label{HomJacobiCondition} &
\circlearrowleft_{x,y,z}{[\alpha(x),[y,z]]}=0 \quad
{\text{(Hom-Jacobi condition)}}
\end{eqnarray}
for all $x, y, z$ in $\g$, where $\circlearrowleft_{x,y,z}$ denotes
summation over the cyclic permutation on $x,y,z$.
\end{definition}
We recover classical Lie algebras when $\alpha =id_\g$ and the identity \eqref{HomJacobiCondition} is the Jacobi identity in this case.

\begin{example}
Let $\{x_1,x_2,x_3\}$  be a basis of a $3$-dimensional vector space
$\g$ over $\K$. The following bracket and   linear map $\alpha$ on
$\g=\K^3$ define a Hom-Lie algebra over $\K${\rm :}
$$
\begin{array}{cc}
\begin{array}{ccc}
 [ x_1, x_2 ] &= &a x_1 +b x_3 \\ {}
 [x_1, x_3 ]&=& c x_2  \\ {}
 [ x_2,x_3 ] & = & d x_1+2 a x_3,
 \end{array}
 & \quad
  \begin{array}{ccc}
  \alpha (x_1)&=&x_1 \\
 \alpha (x_2)&=&2 x_2 \\
   \alpha (x_3)&=&2 x_3
  \end{array}
\end{array}
$$
with $[ x_2, x_1 ]$, $[x_3, x_1 ]$ and  $[
x_3,x_2 ]$ defined via skewsymmetry. It is not a
Lie algebra if $a\neq0$ and $c\neq0$,
since
$$[x_1,[x_2,x_3]]+[x_3,[x_1,x_2]]
+[x_2,[x_3,x_1]]= a c x_2.$$
\end{example}

\begin{example}[Jackson $\mathfrak{sl}_2$]
The Jackson $\mathfrak{sl}_2$ is a $q$-deformation of the classical  $\mathfrak{sl}_2$. This family of Hom-Lie algebras was constructed in \cite{LS3}
using  a quasi-deformation scheme based on discretizing by means of
Jackson $q$-derivations a representation of $\sll$ by
one-dimensional vector fields (first order ordinary differential
operators) and using the twisted commutator bracket defined in
\cite{HLS}. It carries a  Hom-Lie algebra structure but not a Lie algebra structure. It is defined with respect to a basis $\{x_1,x_2,x_3\}$  by the brackets and a linear  map $\alpha$ such that
$$
\begin{array}{cc}
\begin{array}{ccc}
 [ x_1, x_2 ] &= &-2q x_2 \\ {}
 [x_1, x_3 ]&=& 2 x_3 \\ {}
 [ x_2,x_3 ] & = & - \frac{1}{2}(1+q)x_1,
 \end{array}
 & \quad
  \begin{array}{ccc}
  \alpha (x_1)&=&q x_1 \\
 \alpha (x_2)&=&q^2  x_2 \\
   \alpha (x_3)&=&q x_3
  \end{array}
\end{array}
$$
where $q$ is a parameter in $\K$. if $q=1$ we recover the classical $\mathfrak{sl}_2$.
\end{example}

%In a similar way we  introduce the following definition of Hom-Leibniz
%algebra.
%\begin{definition}
%A \emph{Hom-Leibniz algebra} is a triple $(\g, [\cdot,
%\cdot], \alpha)$ where $[\cdot, \cdot]: \g\times \g \rightarrow \g$ is
%a bilinear map and $\alpha: \g \rightarrow \g$
% a linear map
% satisfying
%\begin{equation} \label{Leibnizalgident}
% [[x,y],\alpha(z)]=[[x,z],\alpha (y)]+
% [\alpha(x),[y,z]].
%\end{equation}
%\end{definition}
%Note that if a Hom-Leibniz algebra is skew-symmetric then it is a
%Hom-Lie algebra.

There is a functor from the category of Hom-associative algebras in
the category of Hom-Lie algebras. It provides a different way for constructing Hom-Lie algebras by extending the fundamental construction of Lie algebras by associative algebras via commutator bracket.
\begin{proposition}[\cite{MS}]
Let $( A, \cdot, \alpha) $ be a Hom-associative algebra defined on the vector space $A$  by the
multiplication $\cdot$ and a homomorphism $\alpha$. Then the triple $( A, [~,~], \alpha) $, where the bracket
is defined for  $x,y \in A$ by  $ [ x,y ]=x\cdot  y-y\cdot x
$, is a Hom-Lie algebra.
\end{proposition}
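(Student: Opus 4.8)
The plan is to show directly that the bracket $[x,y]=x\cdot y - y\cdot x$ satisfies the two axioms of a Hom-Lie algebra, using only the Hom-associativity identity \eqref{Hom-ass}. Skew-symmetry is immediate: by definition $[y,x] = y\cdot x - x\cdot y = -(x\cdot y - y\cdot x) = -[x,y]$, so the first axiom holds with no use of $\alpha$ at all. The substance of the proposition lies entirely in verifying the Hom-Jacobi condition $\circlearrowleft_{x,y,z}[\alpha(x),[y,z]]=0$.

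First I would expand $[\alpha(x),[y,z]]$ fully in terms of the dot product. Writing $[y,z]=y\cdot z - z\cdot y$, we get
\begin{equation}
[\alpha(x),[y,z]] = \alpha(x)\cdot(y\cdot z) - \alpha(x)\cdot(z\cdot y) - (y\cdot z)\cdot\alpha(x) + (z\cdot y)\cdot\alpha(x).
\end{equation}
The next step is to apply \eqref{Hom-ass} to each of the four terms so that every summand is rewritten with $\alpha$ landing consistently. Concretely, $\alpha(x)\cdot(y\cdot z) = (x\cdot y)\cdot\alpha(z)$ by Hom-associativity, and $(y\cdot z)\cdot\alpha(x) = \alpha(y)\cdot(z\cdot x)$ reading the identity in the reverse direction; similarly for the two terms involving $z\cdot y$. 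Carrying this out produces eight monomials of the form (product)$\cdot\alpha(\cdot)$ or $\alpha(\cdot)\cdot$(product) attached to the single cyclic summand indexed by $x$.

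Then I would form the full cyclic sum over $x,y,z$, obtaining twenty-four terms once all three summands are expanded. The heart of the argument — and the one routine-but-delicate bookkeeping step — is to check that these twenty-four terms cancel in pairs. The mechanism is that each monomial appearing from the $x$-summand reappears with the opposite sign in the $y$- or $z$-summand after the cyclic shift, precisely because Hom-associativity lets one move the $\alpha$ from the outer factor to the inner product and back. I expect the main obstacle to be purely organizational: keeping the signs and the placement of $\alpha$ straight across all three cyclic positions so that the cancellation is manifest rather than merely asserted. No deeper idea is needed — the identity \eqref{Hom-ass} is the only input, and multiplicativity of $\alpha$ is not even required, mirroring the classical proof that an associative algebra becomes a Lie algebra under the commutator. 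Once the twenty-four terms are seen to cancel, the Hom-Jacobi condition follows and the triple $(A,[\,,\,],\alpha)$ is a Hom-Lie algebra.
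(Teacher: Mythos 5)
Your proposal is correct. The paper itself does not prove this proposition---it is imported from \cite{MS} with only a citation---but your argument is exactly the standard one: skew-symmetry is immediate, and the Hom-Jacobi condition follows by expanding the cyclic sum of commutators and cancelling terms in pairs using only the Hom-associativity identity \eqref{Hom-ass}, and your remark that multiplicativity of $\alpha$ is not needed is accurate. One small bookkeeping correction: the fully expanded cyclic sum consists of twelve monomials (four from each cyclic summand), not twenty-four, and these cancel in six pairs precisely by the mechanism you describe, e.g.\ $\alpha(x)\cdot(y\cdot z)=(x\cdot y)\cdot\alpha(z)$ cancels $-(x\cdot y)\cdot\alpha(z)$ from the $z$-summand, while $-(y\cdot z)\cdot\alpha(x)=-\alpha(y)\cdot(z\cdot x)$ cancels $+\alpha(y)\cdot(z\cdot x)$ from the $y$-summand.
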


\subsection{Hom-preLie algebras}
The Hom-preLie algebras were introduced in \cite{MS} in the study of Hom-Lie admissible algebras.
\begin{definition}[Hom-preLie algebras,]
A left Hom-preLie  algebra (resp. right Hom-preLie  algebra) is a triple $(A, \cdot, \alpha)$ consisting of
a vector space $A$, a bilinear map $\cdot: A\times A \rightarrow A$
and a homomorphism $\alpha$ satisfying
\begin{equation}
\alpha(x)\cdot(y,z)-(x\cdot y)\cdot z=\alpha(y)\cdot  (x\cdot z)-
(y\cdot x)\cdot  \alpha (z),
\end{equation}
resp.
\begin{equation}
\alpha(x)\cdot  (y,z)-
(x\cdot y)\cdot \alpha (z)=\alpha(x)\cdot  (z\cdot y)- (x\cdot  z)\cdot \alpha (y).
\end{equation}
\end{definition}

\begin{remark}Any Hom-associative algebra is a Hom-preLie algebras.

A left Hom-preLie algebra is the opposite algebra of the right Hom-preLie algebra. Both left and right Hom-preLie algebras are Hom-Lie-admissible algebras, that is the commutators define Hom-Lie algebras, see \cite{MS}.
\end{remark}

\section{Hom-dendriform algebras and Hom-Tridendriform algebras}
In this section, we introduce the notions of Hom-dendriform algebras and Hom-tridendriform algebras generalizing the classical dendriform and tridendriform algebras to Hom-algebras setting.
\subsection{Hom-dendriform algebras}
Dendriform algebras were introduced by Loday in \cite{Loday1}. Dendriform algebras are algebras with two operations, which dichotomize the notion of associative algebra.   We generalize now this  notion  by twisting the identities by a linear map.

\begin{definition}[Hom-dendriform algebra] \label{def:HomDendr}
A \emph{Hom-dendriform algebra} is a quadruple $(A, \prec,\succ, \alpha)$ consisting of a vector  space $A$ on which the operations $\prec, \succ : A\otimes A \rightarrow \g$
and $\alpha: A \rightarrow A$
 are linear maps
 satisfying
\begin{eqnarray}\label{HomDendriCondition1}  (x\prec y)\prec \alpha (z)&=& \alpha(x)\prec(y\prec z +y\succ z),
 \\ \label{HomDendriCondition2} (x\succ y)\prec \alpha (z)&=&\alpha(x)\succ(y\prec z),\\
\label{HomDendriCondition3} \alpha(x)\succ(y\succ z)&=&(x\prec y+x\succ y)\succ \alpha (z).
\end{eqnarray}
for  $x, y, z$ in $A$.
\end{definition}
We recover classical dendriform algebra when $\alpha =id$.

Let $(A, \prec,\succ, \alpha) $ and
$(A', \prec',\succ', \alpha') $ be two Hom-dendriform algebras. A linear map
$f\ :A\rightarrow A'$ is
a \emph{ Hom-dendriform algebras morphism} if%
$$
 \prec'\circ (f\otimes f)=f\circ  \prec,  \quad
   \succ'\circ (f\otimes f)=f\circ  \succ \quad \text{
and } \qquad f\circ \alpha=\alpha^{\prime }\circ f.
$$

We show now that we may construct  Hom-dendriform algebras starting from a classical dendriform algebra and an algebra endomorphism. We extend then the construction by composition  introduced by Yau in \cite{Yau:HomolHomLie}  for Lie and associative algebras.
\begin{theorem}\label{thmConstrHomDend}
Let $(A,\prec, \succ)$ be a dendriform  algebra and $\alpha :
A\rightarrow A$ be an dendriform algebra endomorphism. Then
$A_\alpha =(A,\prec_\alpha, \succ_\alpha,\alpha)$, where $\prec_\alpha=\alpha\circ\prec$ and $\succ_\alpha=\alpha\circ\succ$, is a Hom-dendriform algebra.

Moreover, suppose that $(A',\prec', \succ')$ is another dendriform
algebra and  $\alpha ' : A'\rightarrow A'$ is a dendriform algebra
endomorphism. If $f:A\rightarrow A'$ is a dendriform algebra morphism that
satisfies $f\circ\alpha=\alpha '\circ f$ then
$$f:(A,\prec_\alpha, \succ_\alpha,\alpha)\longrightarrow (A',{\prec'} _\alpha, {\succ '} _\alpha ,\alpha ')
$$
is a morphism of Hom-dendriform algebras.
\end{theorem}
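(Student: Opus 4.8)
The plan is to apply Yau's twisting principle: since each new operation is obtained by post-composing an old operation with $\alpha$, and $\alpha$ respects both original products, every twisted Hom-dendriform identity should collapse to $\alpha^2$ applied to a classical dendriform identity. First I would record the two defining computations $x \prec_\alpha y = \alpha(x \prec y)$ and $x \succ_\alpha y = \alpha(x \succ y)$, together with the endomorphism relations $\alpha(x) \prec \alpha(y) = \alpha(x \prec y)$ and $\alpha(x) \succ \alpha(y) = \alpha(x \succ y)$, which hold precisely because $\alpha$ is a dendriform algebra endomorphism (and hence intertwines \emph{both} products).

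Next I would verify each of the three axioms by direct expansion. To establish \eqref{HomDendriCondition1}, I compute the left side $(x \prec_\alpha y) \prec_\alpha \alpha(z) = \alpha\big(\alpha(x \prec y) \prec \alpha(z)\big) = \alpha^2\big((x \prec y) \prec z\big)$, pulling $\alpha$ through the inner product via the endomorphism property. On the right side I use linearity of $\alpha$ to combine $y \prec_\alpha z + y \succ_\alpha z = \alpha(y \prec z + y \succ z)$, and then obtain $\alpha^2\big(x \prec (y \prec z + y \succ z)\big)$ in the same way; the two sides agree by the first classical dendriform relation. The axioms \eqref{HomDendriCondition2} and \eqref{HomDendriCondition3} follow by the identical pattern, each reducing to $\alpha^2$ applied to the corresponding classical relation, so I would only indicate these two verifications rather than write them out in full.

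For the morphism statement, I would check the two multiplicativity conditions pointwise, using that $f$ is a dendriform morphism and the hypothesis $f \circ \alpha = \alpha' \circ f$. Concretely, $f(x) {\prec'}_\alpha f(y) = \alpha'\big(f(x) \prec' f(y)\big) = \alpha'\big(f(x \prec y)\big) = f\big(\alpha(x \prec y)\big) = f(x \prec_\alpha y)$, and identically for ${\succ'}_\alpha$; compatibility with the twisting maps is exactly the assumed hypothesis $f \circ \alpha = \alpha' \circ f$. This gives all three conditions in the definition of a Hom-dendriform morphism.

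I do not anticipate a genuine obstacle: the argument is a mechanical verification. The only points requiring care are to invoke the full endomorphism property of $\alpha$ rather than mere linearity when transporting $\alpha$ across $\prec$ and $\succ$, and to apply linearity of $\alpha$ correctly when collecting the summed argument $y \prec_\alpha z + y \succ_\alpha z$ into a single $\alpha$-image.
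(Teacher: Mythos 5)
Your proposal is correct and follows essentially the same route as the paper: both reduce each twisted identity to $\alpha^2$ applied to the corresponding classical dendriform relation, using that $\alpha$ is a dendriform endomorphism, and both handle the morphism claim by the same pointwise check (which the paper merely states is ``proved similarly'' while you write it out). No gaps.
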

\begin{proof}
Observe that
\begin{align*}
(x\prec_\alpha y)\prec_\alpha \alpha (z) =\alpha^2 ((x\prec y)\prec z), \\
(x\prec_\alpha y)\succ_\alpha \alpha (z) =\alpha^2 ((x\prec y)\succ z), \\
(x\succ_\alpha y)\succ_\alpha \alpha (z) =\alpha^2 ((x\succ y)\succ z), \\
(x\succ_\alpha y)\prec_\alpha \alpha (z) =\alpha^2 ((x\succ y)\prec z). \\
\end{align*}
And similarly
\begin{align*}
 \alpha (x)\prec_\alpha (y\prec_\alpha z) =\alpha^2 (x\prec (y\prec z)), \\
\alpha(x)\prec_\alpha (y\succ_\alpha  z) =\alpha^2 (x\prec (y\succ z)), \\
\alpha(x)\succ_\alpha( y\succ_\alpha  z) =\alpha^2 (x\succ( y\succ z)), \\
\alpha(x)\succ_\alpha (y\prec_\alpha  z) =\alpha^2 (x\succ (y\prec z)). \\
\end{align*}
 Therefore the
identities  \eqref{HomDendriCondition1},\eqref{HomDendriCondition2},\eqref{HomDendriCondition3} follow
obviously from the  identities satisfied by $(A,\prec, \succ)$. The second assertion is proved similarly.
\end{proof}

In the classical case the commutative dendriform algebras are also called Zinbiel algebras (see \cite{Loday0,Loday1}). The left and right operations are further required to identify, $x\prec y=y \succ x$. We call  commutative Hom-dendriform algebras Hom-Zinbiel algebras.

\begin{definition}[Hom-Zinbiel algebra] \label{def:HomZinbiel}
A \emph{Hom-Zinbiel algebra} is a triple $(A,\circ , \alpha)$ consisting of a vector  space $A$ on which  $\circ: A\otimes A \rightarrow A$
and $\alpha: A \rightarrow A$
 are linear maps
 satisfying
\begin{eqnarray}\label{HomZinbCondition1}  & (x\circ y)\circ \alpha (z)= \alpha(x)\circ(y\circ z) + \alpha(x)\circ(z\circ y).
\end{eqnarray}
for  $x, y, z$ in $A$.
\end{definition}
\begin{remark}
One may construct Hom-Zinbiel algebra by composition method starting from a classical Zinbiel algebra $(A,\circ )$ and an algebra endomorphism $\alpha$ by considering $(A,\circ_\alpha , \alpha)$, where $x\circ_\alpha y=\alpha(x\circ y).$
\end{remark}

%\begin{proposition}
%The tensor product $g\otimes R$  of the Hom-Leibniz algebra
%$g$ with the Hom-Zinbiel algebra $R$ is a Hom-preLie algebra.
%\end{proposition}
%\begin{proof}
%To check ???? For $x, y \in g$ and $a, b \in R$ we define
%$\{x \otimes a, y\otimes b\} := [x, y] \otimes a \cdot b.$
%We compute
%\begin{eqnarray*}
%\{\{x \otimes a, y \otimes b\},\alpha( z\otimes c)\}
%=
%\end{eqnarray*}
%As a consequence the associator of the binary operation ${.,.}$ is right-symmetric.
%We have proved that the operation ${.,.}$ is preLie.
%\end{proof}
We show now that Hom-dendriform algebra structure dichotomize the Hom-associative structure and provide a connection to Hom-preLie algebras.

\begin{proposition}
Let $(A, \prec,\succ, \alpha) $   be a Hom-dendriform algebra. Let $\star: A\otimes A\rightarrow A$ be a linear map defined   for $x,y\in A$ by
\begin{equation}\label{DendToAss}
x \star y=x\prec y+ x \succ y.
\end{equation}
Then $(A,\star, \alpha)$ is a Hom-associative algebra.
\end{proposition}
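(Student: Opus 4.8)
The plan is to verify the single Hom-associative identity \eqref{Hom-ass} for the product $\star$, namely $\alpha(x)\star(y\star z)=(x\star y)\star\alpha(z)$, by expanding each side using the definition $x\star y=x\prec y+x\succ y$ and then regrouping the resulting terms so that the three Hom-dendriform axioms \eqref{HomDendriCondition1}, \eqref{HomDendriCondition2}, \eqref{HomDendriCondition3} can be applied. First I would expand the right-hand side: since $\star$ is bilinear, $(x\star y)\star\alpha(z)$ splits into $(x\prec y+x\succ y)\prec\alpha(z)+(x\prec y+x\succ y)\succ\alpha(z)$, which by further bilinearity becomes a sum of four terms of the form $(\,\cdot\,)\prec\alpha(z)$ and $(\,\cdot\,)\succ\alpha(z)$.

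Symmetrically, I would expand the left-hand side $\alpha(x)\star(y\star z)=\alpha(x)\prec(y\star z)+\alpha(x)\succ(y\star z)$, where $y\star z=y\prec z+y\succ z$, again yielding four terms of the form $\alpha(x)\prec(\,\cdot\,)$ and $\alpha(x)\succ(\,\cdot\,)$. The key step is then the term-by-term matching: axiom \eqref{HomDendriCondition1} identifies $(x\prec y)\prec\alpha(z)$ with $\alpha(x)\prec(y\prec z+y\succ z)$, which accounts for the full $\alpha(x)\prec(y\star z)$ contribution on the left against the $(x\prec y)\prec\alpha(z)$ contribution on the right. Axiom \eqref{HomDendriCondition3} dually identifies $(x\prec y+x\succ y)\succ\alpha(z)$ with $\alpha(x)\succ(y\succ z)$, matching the remaining $\succ$-terms. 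The one remaining pair, the middle term $(x\succ y)\prec\alpha(z)$ on the right and $\alpha(x)\succ(y\prec z)$ on the left, is reconciled by axiom \eqref{HomDendriCondition2}.

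The only real bookkeeping obstacle is making sure every one of the four-plus-four expanded summands is accounted for exactly once, with no term left over. Concretely, the three axioms are arranged so that summing \eqref{HomDendriCondition1}, \eqref{HomDendriCondition2}, and \eqref{HomDendriCondition3} gives precisely the equality of the two expansions: the left sides of the three axioms together enumerate all four terms of $(x\star y)\star\alpha(z)$ (the two $\prec$-output terms $(x\prec y)\prec\alpha(z)$ and $(x\succ y)\prec\alpha(z)$, plus the combined $\succ$-output term), while the right sides together enumerate all four terms of $\alpha(x)\star(y\star z)$. Thus adding the three identities yields the desired Hom-associativity after cancellation, and no extra hypothesis such as multiplicativity of $\alpha$ is needed since every axiom is used with its given twisting. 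I expect this to be entirely routine once the expansions are laid out, so the proof reduces to displaying the two four-term expansions and citing the three axioms.
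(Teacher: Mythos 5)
Your proposal is correct and is essentially the paper's own proof: the paper likewise expands $\alpha(x)\star(y\star z)$ into the four terms $\alpha(x)\prec(y\prec z)+\alpha(x)\prec(y\succ z)+\alpha(x)\succ(y\prec z)+\alpha(x)\succ(y\succ z)$ and converts them via axioms \eqref{HomDendriCondition1}, \eqref{HomDendriCondition2}, \eqref{HomDendriCondition3} into the four terms of $(x\star y)\star\alpha(z)$, merely presenting this as a chain of equalities rather than as a sum of the three axioms. Your observation that no multiplicativity of $\alpha$ is required is also consistent with the paper, whose proof uses only bilinearity and the three twisted identities.
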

\begin{proof}For $x,y\in A$ we have
\begin{align*}
\alpha (x)\star (y\star z)&= \alpha (x)\star (y\prec z+ y \succ z),\\
\ &= \alpha (x)\prec (y\prec z+ y \succ z)+\alpha (x)\succ (y\prec z+ y \succ z),\\
\ &= (x\prec y)\prec \alpha (z)+\alpha (x)\succ (y\prec z)+\alpha (x)\succ( y \succ z),\\
\ &= (x\prec y)\prec \alpha (z)+(x\succ y)\prec \alpha (z)+(x \prec y+x\succ y) \succ \alpha (z),\\
\ &= (x\prec y+x\succ y)\prec \alpha (z)+(x \prec y+x\succ y) \succ \alpha (z),\\
\ &= (x\star y)\prec \alpha (z)+(x\star y) \succ \alpha (z),\\
\ &= (x\star y)\star \alpha (z).
\end{align*}

\end{proof}

\begin{proposition}\label{HDendtoHpLie}
Let $(A, \prec,\succ, \alpha) $   be a Hom-dendriform algebra. Let $\lhd: A\otimes A\rightarrow A$ and $\rhd: A\otimes A\rightarrow A$ be  linear maps defined   for $x,y\in A$ by
\begin{equation}\label{DendToAss}
x\rhd y=x\succ y- y \prec x \quad\text{and}\quad  x\lhd y=x\prec y- y \succ x.
\end{equation}
Then $(A,\rhd, \alpha)$ is a left Hom-preLie algebra and $(A,\lhd, \alpha)$ is a right Hom-preLie algebra.
\end{proposition}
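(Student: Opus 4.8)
The plan is to verify the two defining identities directly, reducing the right Hom-preLie case to the left one by a symmetry argument; the core work is therefore to show that $(A,\rhd,\alpha)$ is left Hom-preLie.

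First I would introduce the ``Hom-associator'' $L(x,y,z)=\alpha(x)\rhd(y\rhd z)-(x\rhd y)\rhd\alpha(z)$, so that the left Hom-preLie identity is precisely the symmetry $L(x,y,z)=L(y,x,z)$ in the first two slots. Expanding $x\rhd y=x\succ y-y\prec x$ turns each of the two summands of $L$ into a combination of mixed $\prec/\succ$ expressions. I would then apply the Hom-dendriform axiom \eqref{HomDendriCondition3} to the $\alpha(x)\succ(y\succ z)$ term and \eqref{HomDendriCondition1} to the $(z\prec y)\prec\alpha(x)$ term, after which the $(x\succ y)\succ\alpha(z)$ term and an $\alpha(z)\prec(y\prec x)$ term cancel at once between the two summands, leaving six trilinear terms in $L(x,y,z)$. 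Forming the difference $L(x,y,z)-L(y,x,z)$, four of these cancel directly against their $x\leftrightarrow y$ images, and the remaining two pairs cancel after a single use of \eqref{HomDendriCondition2}, which identifies $\alpha(a)\succ(c\prec b)$ with $(a\succ c)\prec\alpha(b)$. This yields $L(x,y,z)=L(y,x,z)$, i.e. $(A,\rhd,\alpha)$ is left Hom-preLie.

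For the right identity I would avoid repeating the computation and instead exploit the duality $x\lhd y=-(y\rhd x)$. Setting $x\ast y=y\rhd x$, a direct rewriting gives $\alpha(x)\ast(y\ast z)-(x\ast y)\ast\alpha(z)=-L(z,y,x)$, and since $L$ is symmetric in its first two arguments this quantity is symmetric in $y$ and $z$, which is exactly the right Hom-preLie identity for $\ast$. Because that identity is homogeneous of degree two in the product, replacing $\ast$ by $-\ast=\lhd$ preserves it, so $(A,\lhd,\alpha)$ is a right Hom-preLie algebra. The compatibilities $\alpha(x\rhd y)=\alpha(x)\rhd\alpha(y)$ and $\alpha(x\lhd y)=\alpha(x)\lhd\alpha(y)$ are immediate from multiplicativity of $\alpha$ on $\prec$ and $\succ$.

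The main obstacle is purely organizational: tracking the eight expanded terms on each side and choosing, for each, the single Hom-dendriform axiom that puts it into a form allowing cancellation. Once the normal forms are fixed the computation is forced; the one genuine input is that the two leftover pairs are matched precisely by \eqref{HomDendriCondition2}, so I would single that step out as the one to get right.
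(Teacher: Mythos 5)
Your proposal is correct, and it splits naturally into two halves relative to the paper. For the left Hom-preLie identity your computation is essentially the paper's own: expand both terms of $L(x,y,z)=\alpha(x)\rhd(y\rhd z)-(x\rhd y)\rhd\alpha(z)$, rewrite $\alpha(x)\succ(y\succ z)$ via \eqref{HomDendriCondition3} and $(z\prec y)\prec\alpha(x)$ via \eqref{HomDendriCondition1}, cancel the pair $(x\succ y)\succ\alpha(z)$ and $\alpha(z)\prec(y\prec x)$, and then kill the symmetrized residue $-\alpha(x)\succ(z\prec y)-(y\succ z)\prec\alpha(x)+\alpha(y)\succ(z\prec x)+(x\succ z)\prec\alpha(y)$ with \eqref{HomDendriCondition2}. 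You are in fact more precise than the printed proof at this last step: the leftover pairs are matched exactly by \eqref{HomDendriCondition2}, whereas the paper's text cites \eqref{HomDendriCondition1} there, which appears to be a slip. Where you genuinely diverge is the right Hom-preLie half: the paper dismisses it with ``similar proof,'' i.e.\ a second eight-term computation, while you deduce it formally from the left half using the duality $x\lhd y=-(y\rhd x)$, the already-established symmetry of $L$ in its first two slots, and the observation that the right Hom-preLie identity is quadratic in the product so the sign of $\lhd$ versus the opposite of $\rhd$ is immaterial. This buys a shorter, less error-prone argument and makes the left/right duality explicit; the paper's implicit route is self-contained but repeats all the bookkeeping. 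One small caveat: your closing remark that $\alpha(x\rhd y)=\alpha(x)\rhd\alpha(y)$ follows from multiplicativity of $\alpha$ presupposes that $\alpha$ is multiplicative with respect to $\prec$ and $\succ$, which the definition of a Hom-dendriform algebra does not impose; since the proposition requires no such compatibility, the remark should simply be dropped rather than justified.
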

\begin{proof}For $x,y\in A$ we have
\begin{align*}
\alpha (x)\rhd (y\rhd z)&=\alpha (x)\rhd (y\succ z-z\prec y),\\
\ &= \alpha (x)\succ (y\succ z)-\alpha (x)\succ (z\prec y)-(y\succ z)\prec \alpha (x)+(z\prec y)\prec \alpha (x).
\end{align*}
and
\begin{align*}
(x\rhd y)\rhd \alpha (z)&=(x\succ y-y\prec x)\rhd \alpha (z),\\
\ &= (x\succ y)\succ \alpha (z)-(y\prec x)\succ \alpha (z)-\alpha (z)\prec (x\succ y)+\alpha (z)\prec (y\prec x).
\end{align*}
Using \eqref{HomDendriCondition1} and \eqref{HomDendriCondition3}, we may write
\begin{align*}
\alpha (x)\rhd (y\rhd z)&= (x\prec y)\succ \alpha (z)+(x\succ y)\succ \alpha (z)
-\alpha (x)\succ (z\prec y)\\
\ &-(y\succ z)\prec \alpha (x)+
 \alpha(z)\prec (y\prec x)+\alpha(z)\prec (y\succ x).
\end{align*}
Direct simplification and identity \eqref{HomDendriCondition1} lead to
\begin{equation*}
\alpha (x)\rhd (y\rhd z)-(x\rhd y)\rhd \alpha (z)-\alpha (y)\rhd (x\rhd z+(y\rhd x)\rhd \alpha (z)=0.
\end{equation*}
Similar proof shows the right Hom-preLie structure.
\end{proof}
\begin{remark}If $(A, \prec,\succ, \alpha) $   is  a commutative Hom-Dendriform algebra then the corresponding left and right Hom-preLie algebras vanish.
\end{remark}

\subsection{Hom-tridendriform algebras}
The notion of tridendriform algebra were introduced by Loday and Ronco in \cite{Loday-Ronco04}. A tridendriform algebra is a vector space equipped with 3 binary operations $<,>\cdot$ satisfying seven relations. We extend this notion to Hom situation as follows:
\begin{definition}[Hom-Tridendriform algebra] \label{def:HomTriDendr}
A \emph{Hom-tridendriform algebra} is a quintuple $(A, \prec,\succ,\cdot, \alpha)$ consisting of a vector  space $A$ on which the operations  $\prec, \succ ,\cdot: A\otimes A \rightarrow A$
and $\alpha: A \rightarrow A$
 are linear maps
 satisfying
\begin{eqnarray}\label{HomTriDendriCondition1}   (x\prec y)\prec \alpha (z)&=& \alpha(x)\prec(y\prec z +y\succ z+ y\cdot z),
 \\ \label{HomTridDendriCondition2} (x\succ y)\prec \alpha (z)&=&\alpha(x)\succ(y\prec z),\\
\label{HomTriDendriCondition3} \alpha(x)\succ(y\succ z)&=&(x\prec y+x\succ y+x\cdot y)\succ \alpha (z),\\
\label{HomTridDendriCondition4} (x\prec y)\cdot \alpha (z)&=&\alpha(x)\cdot(y\succ z),\\
\label{HomTridDendriCondition5} (x\succ y)\cdot \alpha (z)&=&\alpha(x)\succ(y\cdot z),\\
\label{HomTridDendriCondition6} (x\cdot y)\prec \alpha (z)&=&\alpha(x)\cdot(y\prec z),\\
\label{HomTridDendriCondition7} (x\cdot y)\cdot \alpha (z)&=&\alpha(x)\cdot(y\cdot z),\\
\end{eqnarray}
for  $x, y, z$ in $A$.
\end{definition}
We recover classical tridendriform algebra when $\alpha =id$.
\begin{remark}
Any Hom-tridendriform algebra gives a Hom-dendriform algebra by setting $x\cdot y=0$ for any $x,y\in A$.
\end{remark}

As in Theorem \ref{thmConstrHomDend}, Given a classical tridendriform algebra and an algebra endomorphism we may construct by composition a Hom-tridendriform algebra.
\begin{proposition}\label{thmConstrHomTriDend}
Let $(A,\prec, \succ,\cdot)$ be a tridendriform  algebra and $\alpha :
A\rightarrow A$ be an tridendriform algebra endomorphism. Then
$A_\alpha =(A,\prec_\alpha, \succ_\alpha,\cdot_\alpha,\alpha)$, where $\prec_\alpha=\alpha\circ\prec$,  $\succ_\alpha=\alpha\circ\succ$ and $\cdot_\alpha=\alpha\circ\cdot$, is a Hom-tridendriform algebra.

Moreover, suppose that $(A',\prec', \succ',\cdot')$ is another tridendriform
algebra and  $\alpha ' : A'\rightarrow A'$ is a tridendriform algebra
endomorphism. If $f:A\rightarrow A'$ is a tridendriform algebra morphism that
satisfies $f\circ\alpha=\alpha '\circ f$ then
$$f:(A,\prec_\alpha, \succ_\alpha,\cdot_\alpha,\alpha)\longrightarrow (A',{\prec'} _\alpha, {\succ '} _\alpha ,\cdot'_\alpha\alpha ')
$$
is a morphism of Hom-tridendriform algebras.
\end{proposition}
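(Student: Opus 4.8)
The plan is to follow verbatim the strategy used in the proof of Theorem \ref{thmConstrHomDend}, since the composition (Yau twist) construction treats the extra operation $\cdot$ in exactly the same way as $\prec$ and $\succ$. The single observation driving the whole argument is that, because $\alpha$ is a tridendriform algebra endomorphism, it commutes with each of the three products. Consequently, for any two of the twisted operations $\ast_\alpha,\bullet_\alpha \in \{\prec_\alpha,\succ_\alpha,\cdot_\alpha\}$ (with $\ast,\bullet$ the corresponding untwisted products) and any $x,y,z\in A$, I would first establish the two reduction formulas
\[
(x \ast_\alpha y) \bullet_\alpha \alpha(z) = \alpha^2\bigl((x \ast y) \bullet z\bigr)
\qquad\text{and}\qquad
\alpha(x) \bullet_\alpha (y \ast_\alpha z) = \alpha^2\bigl(x \bullet (y \ast z)\bigr).
\]
Each is immediate: since $x \ast_\alpha y = \alpha(x \ast y)$, one gets $(x \ast_\alpha y) \bullet_\alpha \alpha(z) = \alpha\bigl(\alpha(x \ast y) \bullet \alpha(z)\bigr) = \alpha\bigl(\alpha((x \ast y) \bullet z)\bigr) = \alpha^2((x \ast y) \bullet z)$, where the middle step invokes the endomorphism property of $\alpha$ for the product $\bullet$; the right-hand formula follows symmetrically. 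This exactly reproduces the eight displayed identities of the dendriform proof, now for all nine combinations of the three operations.

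With these formulas in hand, I would then verify the seven defining relations \eqref{HomTriDendriCondition1}--\eqref{HomTridDendriCondition7} one at a time, showing in each case that the two sides equal $\alpha^2$ applied to the two sides of the correspondingly-indexed \emph{classical} tridendriform relation for $(A,\prec,\succ,\cdot)$. For relations \eqref{HomTridDendriCondition2}, \eqref{HomTridDendriCondition4}, \eqref{HomTridDendriCondition5}, \eqref{HomTridDendriCondition6} and \eqref{HomTridDendriCondition7}, each side is a single nested product and the reduction is a direct application of the formulas above. For \eqref{HomTriDendriCondition1} and \eqref{HomTriDendriCondition3}, one side is a sum of three products, so I would additionally use the linearity of $\alpha$ to write, e.g., $y\prec_\alpha z + y\succ_\alpha z + y\cdot_\alpha z = \alpha(y\prec z + y\succ z + y\cdot z)$ before collapsing to $\alpha^2$ of the classical expression. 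Since $\alpha^2$ is linear and $(A,\prec,\succ,\cdot)$ satisfies all seven classical relations, applying $\alpha^2$ preserves every equality, and the seven Hom-tridendriform identities follow.

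For the second assertion, I would argue exactly as in Theorem \ref{thmConstrHomDend}. Given a tridendriform morphism $f$ satisfying $f\circ\alpha = \alpha'\circ f$, it suffices to check that $f$ intertwines each twisted operation: for any $\ast_\alpha$ one has $f(x \ast_\alpha y) = f(\alpha(x \ast y)) = \alpha'\bigl(f(x \ast y)\bigr) = \alpha'\bigl(f(x) \ast' f(y)\bigr) = f(x) \ast'_\alpha f(y)$, using in turn the definition of the twisted product, the hypothesis $f\circ\alpha=\alpha'\circ f$, and the fact that $f$ is a tridendriform morphism; the compatibility $f\circ\alpha = \alpha'\circ f$ itself is assumed. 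Hence $f$ is a morphism of Hom-tridendriform algebras.

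I do not expect any genuine obstacle here: the proof is a bookkeeping extension of the dendriform case, and the only points that require care are (i) invoking the endomorphism property separately for each of the three products, and (ii) treating the three-term sums in \eqref{HomTriDendriCondition1} and \eqref{HomTriDendriCondition3} via linearity of $\alpha$ before factoring out $\alpha^2$. In fact the entire verification could be compressed into the single remark that every term occurring in every Hom-tridendriform identity equals $\alpha^2$ of the corresponding term in the classical identity.
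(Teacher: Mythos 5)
Your proof is correct and takes exactly the approach the paper intends: the paper states this proposition without writing out a proof, noting only that it goes ``as in Theorem~\ref{thmConstrHomDend}'', and your argument is precisely that dendriform proof extended to the third operation, reducing every twisted term to $\alpha^2$ of the corresponding classical term (with linearity handling the three-term sums) and checking the morphism property by the same one-line computation. Nothing further is needed.
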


Similarly as in  \cite{KEF1}, we obtain in the Hom-algebras setting the following new operation:
\begin{proposition}\label{HTridendToHAss}
Let  $(A, \prec,\succ,\cdot, \alpha)$ be a Hom-tridendriform algebra and
  $\ast :A\otimes A \rightarrow A$ be an operation defined by $x\ast y=x\prec y+ x\succ y+x\cdot y$.
Then $(A,\ast,\alpha)$ is a Hom-associative algebra.
\end{proposition}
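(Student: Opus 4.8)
The plan is to verify the single Hom-associativity axiom \eqref{Hom-ass} for the operation $\ast$, namely that $\alpha(x)\ast(y\ast z)=(x\ast y)\ast\alpha(z)$ holds for all $x,y,z\in A$. Following the pattern of the proof that a Hom-dendriform algebra yields a Hom-associative algebra, I would start from the left-hand side and expand it fully. First I would replace $y\ast z$ by $y\prec z+y\succ z+y\cdot z$ and then distribute $\alpha(x)\ast(-)$ into its three components $\alpha(x)\prec(-)$, $\alpha(x)\succ(-)$ and $\alpha(x)\cdot(-)$. By bilinearity this produces nine summands, one for each ordered pair of operations.

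The key step is to rewrite each of these nine summands using exactly one of the seven defining identities, so as to move $\alpha$ onto the third argument. Concretely, the three summands in which the outer operation is $\prec$ collapse, via \eqref{HomTriDendriCondition1}, into the single term $(x\prec y)\prec\alpha(z)$. The three summands with outer operation $\succ$ are handled respectively by \eqref{HomTridDendriCondition2}, \eqref{HomTriDendriCondition3} and \eqref{HomTridDendriCondition5}, yielding $(x\succ y)\prec\alpha(z)$, then $(x\prec y+x\succ y+x\cdot y)\succ\alpha(z)$, and then $(x\succ y)\cdot\alpha(z)$. Finally the three summands with outer operation $\cdot$ are rewritten by \eqref{HomTridDendriCondition6}, \eqref{HomTridDendriCondition4} and \eqref{HomTridDendriCondition7}, giving $(x\cdot y)\prec\alpha(z)$, $(x\prec y)\cdot\alpha(z)$ and $(x\cdot y)\cdot\alpha(z)$.

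It then remains to regroup the resulting terms according to the operation applied to $\alpha(z)$. The terms ending in $\prec\alpha(z)$ sum to $(x\prec y+x\succ y+x\cdot y)\prec\alpha(z)=(x\ast y)\prec\alpha(z)$; the $\succ\alpha(z)$ contribution is already $(x\ast y)\succ\alpha(z)$; and the terms ending in $\cdot\alpha(z)$ sum to $(x\ast y)\cdot\alpha(z)$. Adding these three gives precisely $(x\ast y)\ast\alpha(z)$, the right-hand side, which completes the verification.

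I expect the computation itself to be entirely routine; the only real obstacle is the bookkeeping. The delicate point is to check that all seven axioms are genuinely used and that, after the substitutions, the nine original summands recombine with no leftover terms into exactly the three pieces of $(x\ast y)\ast\alpha(z)$. Since setting $\cdot=0$ reduces the seven identities to the three Hom-dendriform identities and $\ast$ to the operation $\star$ of the earlier proposition, this argument is a direct and consistent extension of that computation, which gives additional confidence that the regrouping closes up correctly.
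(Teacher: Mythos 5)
Your proposal is correct and follows essentially the same route as the paper's own proof: expand $\alpha(x)\ast(y\ast z)$ into nine summands, rewrite each via the seven defining identities (with \eqref{HomTriDendriCondition1} absorbing the three $\prec$-outer terms at once), and regroup by the operation applied to $\alpha(z)$ to recover $(x\ast y)\ast\alpha(z)$. Your bookkeeping of which axiom handles which summand matches the paper's computation exactly, and is in fact stated more explicitly than in the paper.
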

\begin{proof}Using the axioms of Hom-tridendriform algebras we have for $x,y\in A$
\begin{align*}
\alpha (x)\ast (y\ast z)&= \alpha (x)\ast (y\prec z+ y \succ z+y\cdot z),\\
\ &=\alpha (x)< (y\prec z+ y \succ z+y\cdot z) +\alpha (x)>(y\prec z+ y \succ z+y\cdot z)  \\
\ &+\alpha (x)\cdot (y\prec z+ y \succ z+y\cdot z), \\
\ &=(x<y)<\alpha (z)+(x>y)<\alpha (z)+(x<y+x>y+x\cdot y)>\alpha (z) \\
\ &+(x>y)\cdot \alpha (z)+(x\cdot y)<\alpha (z)+(x<y)\cdot \alpha (z)+(x\cdot y)\cdot \alpha (z) ,\\
\ & = (x<y+x>y+ x\cdot y)<\alpha (z)
+ (x<y+x>y+ x\cdot y)>\alpha (z)\\
\ & +(x<y+x>y+ x\cdot y)\cdot \alpha (z)\\
\ &= (x\ast y)\ast \alpha (z).
\end{align*}
\end{proof}

\section{Rota-Baxter operators and Hom-associative algebras } \label{sect3}
We extend in this section the notion of Rota-Baxter algebra to Hom-associatiative  algebras.
\begin{definition}
A  Hom-associative Rota-Baxter algebra is a Hom-associative algebra $( A, \cdot, \alpha) $  endowed with a linear map $R: A\rightarrow A$ subject to the relation
\begin{equation}\label{RBoperator}
R(x)\cdot R(y)=R(R(x)\cdot y+x\cdot R(y)+\theta x\cdot y),
\end{equation}
where $\theta\in\K$.

The map $R$ is called \emph{Rota-Baxter operator} of weight $\theta$ and the identity \eqref{RBoperator} \emph{Rota-Baxter identity}. We denote the  Hom-associative Rota-Baxter algebra by a quadruple $( A, \cdot, \alpha,R). $   We recover classical Rota-Baxter associative algebras when $\alpha = id$ and we denote them by triples $( A, \cdot,R). $
\end{definition}
%\begin{remark}
%The definition remains the same for any Hom-nonassociative algebra.
%\end{remark}

\begin{remark}
Let $( A, \cdot, \alpha,R)$ be a Hom-associative Rota-Baxter algebra, where $R$ is a Rota-Baxter operator of weight $\theta$. Then  $( A, \cdot, \alpha,\theta id-R)$ is a Hom-associative Rota-Baxter algebra.
Indeed, the proof is straightforward and does not use the Hom-associativity of the algebra.
%Thus the result remains true for any Rota-Baxter Hom-nonassociative algebra.
\end{remark}

%\subsection{Constructions of Rota-Baxter Hom-algebras}
In the following we  provide some constructions of Rota-Baxter Hom-algebras starting from classical Rota-Baxter algebra. Also we construct  new Rota-Baxter Hom-algebras from a given Rota-Baxter Hom-algebra. These constructions extend  to Rota-Baxter Hom-algebras the composition method, $n$th derived Hom-algebra construction and a construction involving elements of the centroid.

\begin{theorem}
\label{thm:GAssSALmorphism} Let $(A,\cdot, R)$ be an  associative Rota-Baxter algebra  and  $\alpha : A\rightarrow A$ be an algebra endomorphism commuting with $R$. Then
$(A,\cdot_\alpha,\alpha,R)$, where
$x\cdot_\alpha y=\alpha(x\cdot y)$,  is a  Hom-associative Rota-Baxter
algebra.
\end{theorem}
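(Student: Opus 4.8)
The plan is to check the two defining properties in turn: that $(A,\cdot_\alpha,\alpha)$ is Hom-associative, and that $R$ is still a Rota-Baxter operator of weight $\theta$ for the twisted product $x\cdot_\alpha y=\alpha(x\cdot y)$. For the first, I would expand both sides of \eqref{Hom-ass} using only that $\alpha$ is an endomorphism (so $\alpha(u\cdot v)=\alpha(u)\cdot\alpha(v)$) and that $\cdot$ is associative; a direct computation gives
\[
\alpha(x)\cdot_\alpha(y\cdot_\alpha z)=\alpha^2(x)\cdot\alpha^2(y)\cdot\alpha^2(z)=(x\cdot_\alpha y)\cdot_\alpha\alpha(z).
\]
This is Yau's twisting principle already exploited in Theorem \ref{thmConstrHomDend}, so I expect it to be routine.

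The real content is the Rota-Baxter identity \eqref{RBoperator} for the twisted structure. Starting from the left-hand side, I would write $R(x)\cdot_\alpha R(y)=\alpha\bigl(R(x)\cdot R(y)\bigr)$ and apply the classical Rota-Baxter identity of $(A,\cdot,R)$ inside $\alpha$, turning it into $\alpha R\bigl(R(x)\cdot y+x\cdot R(y)+\theta\,x\cdot y\bigr)$. The decisive step is then to commute $\alpha$ past $R$ via the hypothesis $\alpha\circ R=R\circ\alpha$, and to distribute $\alpha$ over each product using the endomorphism property, so that each summand reassembles into the twisted product; for instance $\alpha(R(x)\cdot y)=\alpha(R(x))\cdot\alpha(y)=R(x)\cdot_\alpha y$, and likewise for the remaining two terms. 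This should produce exactly $R\bigl(R(x)\cdot_\alpha y+x\cdot_\alpha R(y)+\theta\,x\cdot_\alpha y\bigr)$, with the same weight $\theta$.

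The main obstacle I anticipate is purely bookkeeping rather than conceptual: because the twisted product already carries one hidden copy of $\alpha$, I must make sure that the commutation $\alpha R=R\alpha$ and the homomorphism identity are applied in the correct order, so that no extra powers of $\alpha$ are left over and the three inner terms recombine cleanly. Once that tracking is done correctly, the two hypotheses (that $\alpha$ is an algebra endomorphism and that it commutes with $R$) are precisely what is needed, and nothing deeper should be required.
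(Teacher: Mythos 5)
Your proposal is correct and follows essentially the same route as the paper: Hom-associativity is Yau's twisting principle (which the paper simply cites, while you verify it directly), and the Rota-Baxter identity is obtained exactly as in the paper by writing $R(x)\cdot_\alpha R(y)=\alpha(R(x)\cdot R(y))$, applying the classical identity inside $\alpha$, and then commuting $\alpha$ past $R$. One cosmetic remark: your final reassembly step needs no appeal to the endomorphism property at all, since $\alpha(R(x)\cdot y)=R(x)\cdot_\alpha y$ holds by the very definition of $\cdot_\alpha$; the endomorphism hypothesis is only needed for the Hom-associativity part.
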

\begin{proof}The Hom-associative structure of the algebra follows from Yau's Theorem in \cite{Yau:HomolHomLie}.

 Now we check that $R$ is still a Rota-Baxter operator for the Hom-associative algebra.	
 \begin{eqnarray*}
R(x)\cdot _\alpha R(y)&=&\alpha (R(x)\cdot R(y)),\\
\ &=&\alpha(R(R(x)\cdot y+x\cdot R(y)+\theta x\cdot y)),\\
\ &=&\alpha(R(R(x)\cdot y))+\alpha(R(x\cdot R(y)))+\alpha(R(\theta x\cdot y))).\\
\end{eqnarray*}
Since $\alpha$ and $R$ commute then
\begin{eqnarray*}
R(x)\cdot _\alpha R(y)&=&R(\alpha(R(x)\cdot y))+R(\alpha(x\cdot R(y)))+R(\alpha(\theta x\cdot y))),\\
\ &=& R(R(x)\cdot_\alpha y+x\cdot_\alpha R(y)+\theta x_\alpha \cdot y)).\\
\end{eqnarray*}
\end{proof}

%Similarly, we obtain the following construction by composition of Hom-Lie Rota-Baxter algebras.
%\begin{theorem}\label{thmYauConstrHomLie}
%Let $(\g,[~,~],R)$ be a Lie Rota-Baxter algebra and $\alpha :
%\g\rightarrow \g$ be a Lie algebra endomorphism commuting with $R$. Then
%$(\g,[~,~]_\alpha,\alpha,R)$, where $[~,~]_\alpha=\alpha\circ[~,~]$,  is a Hom-Lie Rota-Baxter algebra.
%\end{theorem}

%%\begin{proof}
%%Observe that $[\alpha (x),[y,z]_\alpha]_\alpha =\alpha [\alpha
%%(x),\alpha[y,z]]=\alpha^2 [x,[y,z]] $. Therefore the Hom-Jacobi
%%identity for $\mathfrak{g}_\alpha=(\g,[~,~]_\alpha,\alpha)$ follows
%%obviously from the Jacobi identity of $(\g,[~,~])$. The
%%skew-symmetry and the second assertion are proved similarly.
%%\end{proof}
%\begin{remark}
%Such a construction could be generalized to any kind of nonassociative Hom-algebras for example it is valid for all admissible Hom-Lie algebras.
%\end{remark}

More generally, given a Hom-associative Rota-Baxter algebra $\left( A,\mu ,\alpha,R \right) $, one may ask whether this
Hom-associative Rota-Baxter algebra is   induced by an
ordinary associative Rota-Baxter  algebra $(A,\widetilde{\mu},R)$, that is $\alpha$
is an algebra endomorphism with respect to $\widetilde{\mu}$ and
$\mu=\alpha\circ\widetilde{\mu}$.

 Let $(A,\mu ,\alpha)$ be a multiplicative  Hom-associative algebra. It was observed in \cite{Gohr} that in case $\alpha$ is invertible, the composition method using $\alpha^{-1}$ leads to an associative algebra. If $\alpha$
is an algebra endomorphism with respect to $\widetilde{\mu}$
then $\alpha$ is also an algebra endomorphism
with respect to $\mu$. Indeed,
$$\mu(\alpha(x),\alpha(y))=\alpha\circ\widetilde{\mu}(\alpha(x),\alpha(y))=
\alpha\circ\alpha\circ\widetilde{\mu}(x,y)=\alpha\circ\mu(x,y).$$
\noindent
If $\alpha$ is bijective then $\alpha^{-1}$ is
also an algebra automorphism. Therefore one may use an untwist
operation on the Hom-associative algebra in order to recover the
associative algebra ($\widetilde{\mu}=\alpha^{-1}\circ\mu$).

\begin{proposition}
Let $(A,\mu ,\alpha,R)$ be a multiplicative  Hom-associative  Rota-Baxter algebra where  $\alpha$ is invertible  and such that $\alpha$ and $R$ commute.  Then  $(A,\mu_{\alpha^{-1}}=\alpha^{-1}\circ \mu ,R)$ is a Hom-associative  Rota-Baxter algebra.
\end{proposition}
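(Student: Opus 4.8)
The plan is to split the statement into two independent claims: that the untwisted product $\mu_{\alpha^{-1}}=\alpha^{-1}\circ\mu$ is associative, and that $R$ survives as a Rota-Baxter operator of the same weight $\theta$ for this product. The first claim is exactly the untwisting observation of \cite{Gohr} recalled just above the proposition. Writing $x\ast y=\mu_{\alpha^{-1}}(x,y)=\alpha^{-1}\mu(x,y)$, so that $\mu(x,y)=\alpha(x\ast y)$, I would feed this into the Hom-associativity axiom $\mu(\alpha(x),\mu(y,z))=\mu(\mu(x,y),\alpha(z))$ and use multiplicativity of $\alpha$ repeatedly; the left-hand side then collapses to $\alpha^2\bigl(x\ast(y\ast z)\bigr)$ and the right-hand side to $\alpha^2\bigl((x\ast y)\ast z\bigr)$. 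Since $\alpha$, and hence $\alpha^2$, is invertible, cancelling $\alpha^2$ gives associativity of $\ast$, so $(A,\mu_{\alpha^{-1}})$ is an ordinary associative algebra.

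For the second claim the only ingredient beyond the hypotheses is that $\alpha^{-1}$ commutes with $R$: from $\alpha\circ R=R\circ\alpha$, composing with $\alpha^{-1}$ on each side yields $R\circ\alpha^{-1}=\alpha^{-1}\circ R$. I would then compute the Rota-Baxter identity for $\mu_{\alpha^{-1}}$ directly. Starting from $\mu_{\alpha^{-1}}(R(x),R(y))=\alpha^{-1}\mu(R(x),R(y))$ and applying the Rota-Baxter identity \eqref{RBoperator} for $\mu$ gives $\alpha^{-1}R\bigl(\mu(R(x),y)+\mu(x,R(y))+\theta\,\mu(x,y)\bigr)$. Sliding $\alpha^{-1}$ past $R$ by the commutation relation and then distributing it across the sum turns each summand $\alpha^{-1}\mu(\cdots)$ into the corresponding $\mu_{\alpha^{-1}}(\cdots)$, producing $R\bigl(\mu_{\alpha^{-1}}(R(x),y)+\mu_{\alpha^{-1}}(x,R(y))+\theta\,\mu_{\alpha^{-1}}(x,y)\bigr)$. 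This is precisely the Rota-Baxter identity for $\mu_{\alpha^{-1}}$, so $R$ is a Rota-Baxter operator of weight $\theta$ for the untwisted algebra.

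I do not expect any serious obstacle: the computation is short precisely because $\mu_{\alpha^{-1}}$ differs from $\mu$ only by the overall invertible factor $\alpha^{-1}$, and the role of the hypothesis $\alpha R=R\alpha$ is exactly to let that factor slide freely past $R$. The two points that require care are bookkeeping ones: in the first claim I must invoke multiplicativity of $\alpha$ (not merely Hom-associativity) in order to push $\alpha$ outside the products, and in the second I must use the commutation relation in the form $R\circ\alpha^{-1}=\alpha^{-1}\circ R$ rather than only $R\circ\alpha=\alpha\circ R$. Finally I would flag that, since $\mu_{\alpha^{-1}}$ is genuinely associative rather than only Hom-associative, the conclusion is in fact an \emph{associative} Rota-Baxter algebra in the sense of the triple notation $(A,\cdot,R)$ introduced earlier, in agreement with \cite{Gohr}.
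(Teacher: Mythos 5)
Your proposal is correct and follows essentially the same route as the paper: associativity of $\mu_{\alpha^{-1}}$ is obtained by combining Hom-associativity with multiplicativity and invertibility of $\alpha$ (the paper applies $\alpha^{-2}$ to the Hom-associativity identity, you equivalently factor $\alpha^{2}$ out after substituting $\mu=\alpha\circ\mu_{\alpha^{-1}}$), and the Rota-Baxter identity for the new product follows from the fact that $\alpha^{-1}$ and $R$ commute, which is exactly the paper's (sketched) argument. Your closing remark is also apt: despite the word ``Hom-associative'' in the statement, the untwisted product is genuinely associative, consistent with the triple notation $(A,\mu_{\alpha^{-1}},R)$ and the observation of Gohr recalled just before the proposition.
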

\begin{proof}The associativity condition follows from
\begin{align*}
0&= \alpha^{-2}\mu(\alpha(x),\mu (y, z))-\mu(\mu(x, y), \alpha(z)),\\
\ &=  \alpha^{-1}\mu(x,\alpha^{-1} \mu(y, z))-\alpha^{-1}\mu(\alpha^{-1}\mu(x, y), z),\\
\ &=  \mu_{ \alpha^{-1}}(x,\mu_{ \alpha^{-1}}(y, z))-\mu_{ \alpha^{-1}}(\mu_{ \alpha^{-1}}(x, y), z).
\end{align*}
Since  $\alpha$ and $R$ commute then  $\alpha^{-1}$ and $R$ commute as well. Hence $R$ is a Rota-Baxter operator for the new multiplication.
\end{proof}

We may also derive new Hom-associative algebras from a given multiplicative Hom-associative algebra using the following procedure. We split the  definition given in  \cite{Yau4} into two types of   $n$th derived Hom-algebras.
\begin{definition}[\cite{Yau4}]\label{defiNDerivedHom} Let  $\left( A,\mu ,\alpha \right) $ be a multiplicative Hom-algebra and $n\geq 0$. The  $n$th derived Hom-algebra of type $1$ of $A$ is  defined by
\begin{equation}\label{DerivedHomAlgtype1}
A^n=\left( A,\mu^{(n)}=\alpha^{n }\circ\mu ,\alpha^{n+1} \right),
\end{equation}
and the $n$th derived Hom-algebra of type $2$ of $A$ is  defined by
\begin{equation}\label{DerivedHomAlgtype2}
A^n=\left( A,\mu^{(n)}=\alpha^{2^n -1}\circ\mu ,\alpha^{2^n} \right).
\end{equation}
Note that in both cases  $A^0=A$ and  $A^1=\left( A,\mu^{(1)}=\alpha\circ\mu ,\alpha^{2} \right)$.
% and $A^{n+1}=(A^n)^1.$
\end{definition}
Observe that for $n\geq 1$ and $x,y,z\in A$ we have
\begin{eqnarray*}
\mu^{(n)}(\mu^{(n)}(x,y),\alpha^{n+1}(z))&=& \alpha^{n }\circ\mu(\alpha^{n }\circ\mu(x,y),\alpha^{n+1}(z))\\
\ &=& \alpha^{2n }\circ\mu(\mu(x,y),\alpha(z)).
\end{eqnarray*}
Therefore, following  \cite{Yau4}, one obtains the following result.
\begin{theorem}\label{ThmConstrNthDerivedAss}
Let   $\left( A,\mu ,\alpha,R \right) $ be a multiplicative Hom-associative Rota-Baxter algebra such that $\alpha $ and $R$ commute.Then the $n$th derived Hom-algebra of type $1$ is  also a Hom-associative Rota-Baxter algebra.
\end{theorem}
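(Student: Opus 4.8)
The plan is to verify directly that the $n$th derived Hom-algebra of type $1$, namely $A^n = (A, \mu^{(n)} = \alpha^n \circ \mu, \alpha^{n+1}, R)$, is a Hom-associative Rota-Baxter algebra. The Hom-associativity of $A^n$ is already guaranteed by Yau's result recalled just before the statement (and witnessed by the displayed computation showing $\mu^{(n)}(\mu^{(n)}(x,y),\alpha^{n+1}(z)) = \alpha^{2n}\circ\mu(\mu(x,y),\alpha(z))$, together with the symmetric computation on the other side). So the only genuine work is to check that the same operator $R$ satisfies the Rota-Baxter identity \eqref{RBoperator} with respect to the new multiplication $\mu^{(n)}$, for the same weight $\theta$.

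First I would write out the left-hand side of the Rota-Baxter identity for $\mu^{(n)}$. By definition $\mu^{(n)}(R(x),R(y)) = \alpha^n(\mu(R(x),R(y)))$. Applying the Rota-Baxter identity for the original Hom-associative Rota-Baxter algebra $(A,\mu,\alpha,R)$ gives $\mu(R(x),R(y)) = R\bigl(\mu(R(x),y) + \mu(x,R(y)) + \theta\,\mu(x,y)\bigr)$, so after applying $\alpha^n$ and using linearity we obtain
\begin{equation*}
\mu^{(n)}(R(x),R(y)) = \alpha^n R\bigl(\mu(R(x),y)\bigr) + \alpha^n R\bigl(\mu(x,R(y))\bigr) + \theta\,\alpha^n R\bigl(\mu(x,y)\bigr).
\end{equation*}

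The key step is the commutation hypothesis: since $\alpha$ and $R$ commute, so do $\alpha^n$ and $R$, hence $\alpha^n R = R \alpha^n$ on each summand. Pulling $R$ outside and recognizing $\alpha^n\circ\mu = \mu^{(n)}$, I would rewrite the three terms as $R\bigl(\mu^{(n)}(R(x),y)\bigr)$, $R\bigl(\mu^{(n)}(x,R(y))\bigr)$, and $\theta\,R\bigl(\mu^{(n)}(x,y)\bigr)$, so that
\begin{equation*}
\mu^{(n)}(R(x),R(y)) = R\bigl(\mu^{(n)}(R(x),y) + \mu^{(n)}(x,R(y)) + \theta\,\mu^{(n)}(x,y)\bigr),
\end{equation*}
which is precisely the Rota-Baxter identity for $\mu^{(n)}$ with the same weight $\theta$. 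This mirrors the argument in Theorem \ref{thm:GAssSALmorphism}, the only difference being that $\alpha$ is replaced by $\alpha^n$.

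I do not expect a serious obstacle here: the commutativity of $\alpha$ and $R$ is exactly the hypothesis that lets the operator $R$ pass through the $\alpha^n$-twist of the product, and multiplicativity of $\alpha$ is what keeps $\mu^{(n)}$ a well-behaved Hom-associative multiplication (already supplied by Yau). The one point to state carefully is that $\alpha^n$ and $R$ commute as a consequence of $\alpha R = R\alpha$, which follows by an immediate induction on $n$. Once that observation is recorded, the verification is the short chain of equalities above, and the proof is complete.
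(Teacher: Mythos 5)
Your proposal is correct and follows the same route as the paper: rely on Yau's result (via the displayed computation and multiplicativity of $\alpha$) for the Hom-associativity of $\mu^{(n)}=\alpha^{n}\circ\mu$, then apply $\alpha^{n}$ to the original Rota-Baxter identity and use the commutation of $\alpha$ (hence $\alpha^{n}$) with $R$ to pull $R$ outside and recognize the Rota-Baxter identity for $\mu^{(n)}$ with the same weight $\theta$. In fact your write-up makes explicit the commutation step that the paper's one-line proof leaves implicit, so nothing is missing.
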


\begin{proof}
The operator $R$ is  a Rota-Baxter operator for the new multiplication since
$$
\alpha^n(\mu (R(x),R(y)))=\alpha^n(R(\mu (x, R(y)))+R(\mu (R(x), y))+\theta R(\mu (x, y))).
$$
\end{proof}

In the following we construct  Hom-associative Rota-Baxter algebras involving elements of the centroid of associative  Rota-Baxter algebras. The construction of Hom-algebras using elements of the centroid was initiated in \cite{BenayadiMakhlouf} for Lie algebras.

 Let   $(A, \cdot)$ be an associative algebra. An endomorphism  $\alpha\in End (A) $ is said to be an element of the centroid if $\alpha(x\cdot y)=\alpha(x)\cdot y=x \cdot \alpha (y)$  for any $x,y\in A$. The centroid of $A$ is defined by
$$Cent (A )=\{ \alpha\in End (A) : \alpha(x\cdot y)=\alpha(x)\cdot y=x \cdot \alpha (y), \  \forall x,y\in A\}.
$$
The same definition of the centroid  is assumed for Hom-associative algebras.
\begin{proposition}
Let   $(A, \mu,R)$ be an associative Rota-Baxter algebra where $R$ is a Rota-Baxter operator of weight $\theta$. Let   $\alpha\in Cent (A) $ and  set for $x,y\in A$
\begin{align*}
\mu_\alpha^1(x,y)=\mu (\alpha (x),y)\quad \text{and} \quad
\mu_\alpha^2(x,y) =\mu(\alpha (x),\alpha (y)).
\end{align*}
Assume that $\alpha$ and $R$ commute. Then $(A,\mu_\alpha^1,\alpha,R )$ and $(A,\mu_\alpha^2,\alpha ,R)$ are Hom-associative Rota-Baxter algebras.
\end{proposition}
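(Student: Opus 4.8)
The plan is to treat each of the two twisted products separately and, for each, to check two things: that $(A,\mu_\alpha^i,\alpha)$ satisfies the Hom-associativity identity \eqref{Hom-ass}, and that $R$ remains a Rota-Baxter operator of the same weight $\theta$ in the sense of \eqref{RBoperator}. The engine of every computation is the defining relation of the centroid, $\alpha(x\cdot y)=\alpha(x)\cdot y=x\cdot\alpha(y)$, which lets me slide any power of $\alpha$ freely across $\mu$ and onto whichever factor is convenient.

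I would dispose of $\mu_\alpha^2$ first, since the centroid relation collapses it to a pure composition product: $\mu_\alpha^2(x,y)=\alpha(x)\cdot\alpha(y)=\alpha^2(x\cdot y)$, so $\mu_\alpha^2=\alpha^2\circ\mu$. Hom-associativity then follows by reducing both $\mu_\alpha^2(\alpha(x),\mu_\alpha^2(y,z))$ and $\mu_\alpha^2(\mu_\alpha^2(x,y),\alpha(z))$ to the common normal forms $\alpha^5(x\cdot(y\cdot z))$ and $\alpha^5((x\cdot y)\cdot z)$, which coincide by associativity of $\mu$. For the Rota-Baxter identity I would expand $\mu_\alpha^2(R(x),R(y))=\alpha^2(R(x)\cdot R(y))$, apply \eqref{RBoperator} for $\mu$ inside the $\alpha^2$, and then use that $\alpha$, hence $\alpha^2$, commutes with $R$ to move $\alpha^2$ inside the outer $R$; the result matches $R(\mu_\alpha^2(R(x),y)+\mu_\alpha^2(x,R(y))+\theta\,\mu_\alpha^2(x,y))$ term by term.

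The product $\mu_\alpha^1(x,y)=\alpha(x)\cdot y$ twists only the left factor, so the same strategy works but the distribution of $\alpha$'s across the two sides of \eqref{Hom-ass} is asymmetric and must be normalized by hand. Expanding $\mu_\alpha^1(\alpha(x),\mu_\alpha^1(y,z))$, invoking associativity of $\mu$ once, and using the centroid relation to gather the internal $\alpha$'s onto the first factor gives $(\alpha^3(x)\cdot y)\cdot z$; the same reduction applied to $\mu_\alpha^1(\mu_\alpha^1(x,y),\alpha(z))$ produces the same expression, establishing Hom-associativity. For the Rota-Baxter identity I would write $\mu_\alpha^1(R(x),R(y))=\alpha(R(x))\cdot R(y)=R(\alpha(x))\cdot R(y)$, where the commutation of $\alpha$ and $R$ is precisely what converts $\alpha(R(x))$ into $R(\alpha(x))$, then apply \eqref{RBoperator} for $\mu$ and compare with the expansion of the right-hand side.

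The main obstacle is purely combinatorial rather than conceptual: one must track exactly how many copies of $\alpha$ land on each argument when repeatedly applying the centroid relation, and for $\mu_\alpha^1$ the two sides of the Hom-associativity identity begin with $\alpha$ placed differently, so both must be brought to the common form $(\alpha^3(x)\cdot y)\cdot z$ before the associativity of $\mu$ can be used. Beyond the centroid property and the standing assumption that $\alpha$ commutes with $R$, no new idea is needed.
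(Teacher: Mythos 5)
Your proposal is correct and follows essentially the same route as the paper: the centroid relation is used to reduce Hom-associativity of both twisted products to the associativity of $\mu$, and the commutation of $\alpha$ with $R$ together with the classical Rota-Baxter identity yields the twisted Rota-Baxter identity for $\mu_\alpha^1$ and $\mu_\alpha^2$. If anything, your bookkeeping (writing $\mu_\alpha^2=\alpha^2\circ\mu$ and reducing both sides to the common normal forms $\alpha^5$ applied to the associativity expressions, resp.\ $(\alpha^3(x)\cdot y)\cdot z$) is more careful than the paper's own computation, which checks only one side of the Hom-associativity identity explicitly, contains slips in the powers of $\alpha$, and in the $\mu_\alpha^2$ Rota-Baxter verification carries spurious minus signs apparently copied from the skew-symmetric Lie-algebra analogue in Section 5.
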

\begin{proof}
Observe that
$$\mu_\alpha^1(\alpha (x),\mu_\alpha^1(y,z))=\mu(\alpha^2 (x),\mu(\alpha( y),z))=\mu(\alpha^2 (x),\alpha  \mu(y,z))=\alpha (\mu(\alpha (x),  \mu(y,z)))=\alpha^2 (\mu(x,  \mu(y,z))).
$$
Similarly
$$\mu_\alpha^2(\alpha (x),\mu_\alpha^2(y,z))=\mu(\alpha^2 (x),\alpha\mu(\alpha( y),\alpha (z)))=\mu(\alpha^2 (x),\alpha^2  \mu(y,z))=\alpha^2 (\mu(x,  \mu(y,z))).
$$

 The triple $(A,\mu_\alpha^1,\alpha)$ and $(A,\mu_\alpha^2,\alpha )$ are Hom-associative algebras.

They are  also  Rota-Baxter algebras since
\begin{align*}
\mu_\alpha^1(R(x), R(y))&=\mu (\alpha(R(x)), R(y))=\alpha(\mu(R(x), R(y))),\\
\ &=\alpha(R(\mu (x, R(y))+\mu (R(x), y)+\theta \mu (x, y))),\\
\ &=R(\mu (\alpha(x), R(y))+\mu (\alpha(R(x)), y)+\theta \mu (\alpha(x), y)),\\
\ &=R(\mu _\alpha^1(x, R(y))+\mu _\alpha^1(R(x), y)+\theta \mu _\alpha^1(x, y)).\\
\end{align*}
and
\begin{align*}
\mu_\alpha^2 (R(x), R(y))&=\mu( \alpha(R(x)), \alpha(R(y)))=\alpha(\mu( R(x), \alpha(R(y))))=-\alpha^2(\mu( R(y),R(x)))=\alpha^2(\mu( R(x),R(y))),\\
\ &=\alpha^2(R(\mu( x, R(y))+\mu( R(x), y)+\theta \mu( x, y))),\\
\ &=R(\alpha(\mu( \alpha(x), R(y)))+\alpha(\mu( \alpha(R(x)), y))+\theta \alpha(\mu( \alpha(x), y))),\\
\ &=-R(\alpha(\mu(  R(y),\alpha(x)))+\alpha(\mu(  y,\alpha(R(x))))+\theta \alpha(\mu(  y,\alpha(x)))),\\
\ &=-R(\mu(  \alpha(R(y)),\alpha(x))+\mu( \alpha( y),\alpha(R(x)))+\theta \mu(  \alpha(y),\alpha(x))),\\
\ &=R(\mu_\alpha^2(x, R(y)+\mu_\alpha^2( R(x), y)+\theta \mu_\alpha^2( x, y)).\\
\end{align*}
\end{proof}

\section{Some Functors}
We show in this section that there is a functor from the category of Hom-associative Rota-Baxter algebras to  the category of Hom-preLie algebras and then a functor  to the category of Hom-dendriform algebras and Hom-tridendriform algebras.

\begin{proposition}\label{HAssRBtoHpLie}
Let $( A, \cdot, \alpha,R) $ be a  Hom-associative Rota-Baxter algebra where $R$ is  a Rota-Baxter operator of weight $0$. Assume that $\alpha$ and $R$ commute. We define the operation $\ast$  on $A$ by
\begin{equation}
x\ast y=R(x)\cdot y-y\cdot R(x).
\end{equation}
Then $( A, \ast, \alpha) $ is a Hom-preLie algebra.
\end{proposition}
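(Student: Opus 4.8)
The plan is to avoid a brute-force check of the left Hom-preLie identity for $\ast$ and instead factor $\ast$ through a Hom-dendriform structure, so that Proposition~\ref{HDendtoHpLie} can be quoted directly. Concretely, I would define two operations on $A$ by
$$x\prec y=x\cdot R(y),\qquad x\succ y=R(x)\cdot y.$$
First I would verify that $(A,\prec,\succ,\alpha)$ is a Hom-dendriform algebra, i.e.\ that it satisfies \eqref{HomDendriCondition1}--\eqref{HomDendriCondition3}. Then I would observe that, with these operations,
$$x\rhd y=x\succ y-y\prec x=R(x)\cdot y-y\cdot R(x)=x\ast y,$$
so that $\ast$ is exactly the left Hom-preLie product associated to $(A,\prec,\succ,\alpha)$ by Proposition~\ref{HDendtoHpLie}. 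Applying that proposition then yields at once that $(A,\ast,\alpha)$ is a left Hom-preLie algebra.

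The substance of the argument is the dendriform verification, and each of the three axioms collapses to a single line. Axiom \eqref{HomDendriCondition2} needs only Hom-associativity \eqref{Hom-ass} and the commutation $\alpha R=R\alpha$: both sides reduce to $(R(x)\cdot y)\cdot\alpha(R(z))$ once one rewrites $R(\alpha(z))=\alpha(R(z))$ and applies \eqref{Hom-ass}. Axioms \eqref{HomDendriCondition1} and \eqref{HomDendriCondition3} use in addition the weight-$0$ Rota-Baxter identity \eqref{RBoperator}, which reads $R(a)\cdot R(b)=R\big(R(a)\cdot b+a\cdot R(b)\big)$. For \eqref{HomDendriCondition3}, for instance, the right-hand side is
$$R\big(x\cdot R(y)+R(x)\cdot y\big)\cdot\alpha(z)=\big(R(x)\cdot R(y)\big)\cdot\alpha(z),$$
while the left-hand side equals $\alpha(R(x))\cdot(R(y)\cdot z)$, and the two agree by \eqref{Hom-ass}; \eqref{HomDendriCondition1} is entirely symmetric.

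The key (and essentially only nontrivial) point is that the weight-$0$ Rota-Baxter relation is precisely what collapses $R(y\prec z+y\succ z)$ into the single product $R(y)\cdot R(z)$, after which Hom-associativity finishes the job. Accordingly, the main obstacle is not conceptual but bookkeeping: one must consistently push $R$ through $\alpha$ via $\alpha R=R\alpha$ and then apply \eqref{Hom-ass} in the correct direction so that the two sides of each axiom land on the same normal form. I would note that multiplicativity of $\alpha$ is never invoked—only the commutation of $\alpha$ and $R$. A fully direct verification of the left Hom-preLie identity for $\ast$ is also possible, but it is messier, since $R(x\ast y)=R(R(x)\cdot y)-R(y\cdot R(x))$ does not simplify term by term (the Rota-Baxter relation combines two mixed terms only when they appear as a sum); the dendriform reorganization above is exactly what renders the required cancellations transparent.
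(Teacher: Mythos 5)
Your argument is correct, but it is not the route the paper takes for this statement: the paper proves Proposition \ref{HAssRBtoHpLie} by a direct sixteen-term expansion of $\alpha(x)\ast(y\ast z)-(x\ast y)\ast\alpha(z)-\alpha(y)\ast(x\ast z)+(y\ast x)\ast\alpha(z)$, pairing terms such as $R(R(x)\cdot y)\cdot\alpha(z)$ and $R(y\cdot R(x))\cdot\alpha(z)$ via the Rota-Baxter identity \eqref{RBoperator} and then cancelling what remains with Hom-associativity. Your factorization --- checking that $x\prec y=x\cdot R(y)$ and $x\succ y=R(x)\cdot y$ satisfy \eqref{HomDendriCondition1}--\eqref{HomDendriCondition3} and then invoking Proposition \ref{HDendtoHpLie} with $x\rhd y=x\succ y-y\prec x=x\ast y$ --- is precisely the alternative the author himself records in the remark following Proposition \ref{pro4.4}, where Proposition \ref{HAssRBtoHpLie} is said to be a corollary of Proposition \ref{prop4.3} (whose content is exactly your dendriform verification) together with Proposition \ref{HDendtoHpLie}. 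Your checks are sound: \eqref{HomDendriCondition2} needs only $\alpha R=R\alpha$ and \eqref{Hom-ass}, while \eqref{HomDendriCondition1} and \eqref{HomDendriCondition3} use the weight-$0$ identity to collapse $R(y\prec z+y\succ z)$ into $R(y)\cdot R(z)$; and since Proposition \ref{HDendtoHpLie} is established in Section 2 independently of any Rota-Baxter structure, there is no circularity, even though Proposition \ref{prop4.3} appears after Proposition \ref{HAssRBtoHpLie} in the paper's ordering. What your route buys is economy and extra structure: each axiom is a one-line computation, the required cancellations become transparent, and you obtain for free the associated Hom-associative product $x\star y=x\prec y+x\succ y$ and the right Hom-preLie product $x\lhd y=x\cdot R(y)-R(y)\cdot x$. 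What the paper's direct computation buys is self-containedness within Section 4 and an explicit display of how the Rota-Baxter identity interacts with Hom-associativity, which is presumably why the author gives the brute-force proof first and only afterwards notes the shortcut you found.
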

\begin{proof}
For $x,y\in A$ we have
\begin{align*}
\alpha (x)\ast (y\ast z)&= \alpha (x)\ast (R(y)\cdot z -z\cdot R(y)),\\
\ &= R(\alpha (x))\cdot (R(y)\cdot z -z\cdot R(y))-(R(y)\cdot z -z\cdot R(y))\cdot R (\alpha (x)),\\
\ &=R(\alpha (x))\cdot (R(y)\cdot z) -R(\alpha (x))\cdot (z\cdot R(y))-(R(y)\cdot z)\cdot R (\alpha (x)) +(z\cdot R(y))\cdot R (\alpha (x)),\\
\ &= \alpha (R(x))\cdot (R(y)\cdot z) -\alpha (R(x))\cdot (z\cdot R(y))-(R(y)\cdot z)\cdot \alpha (R (x)) +(z\cdot R(y))\cdot \alpha (R (x)),
\end{align*}
and
\begin{align*}
(x\ast y)\ast \alpha (z)&= (R(x)\cdot y-y\cdot R(x))\ast \alpha (z),\\
\ &=R(R(x)\cdot y-y\cdot R(x))\cdot \alpha (z)- \alpha (z)\cdot R(R(x)\cdot y-y\cdot R(x)),\\
\ &= R(R(x)\cdot y)\cdot \alpha (z)-(y\cdot R(x))\cdot \alpha (z)- \alpha (z)\cdot R(R(x)\cdot y)+\alpha (z)\cdot R(y \cdot R(x)).\\
\end{align*}
Then
\begin{eqnarray*}
&&\alpha (x)\ast (y\ast z)-(x\ast y)\ast \alpha (z) -\alpha (y)\ast (x\ast z)+(y\ast x)\ast \alpha (z)=\\
&& \alpha (R(x))\cdot (R(y)\cdot z) -\alpha (R(x))\cdot (z\cdot R(y))-(R(y)\cdot z)\cdot \alpha (R (x)) +(z\cdot R(y))\cdot \alpha (R (x))\\
&&-R(R(x)\cdot y)\cdot \alpha (z)+R(y\cdot R(x))\cdot \alpha (z)+ \alpha (z)\cdot R(R(x)\cdot y)-\alpha (z)\cdot R(y \cdot R(x))\\
&& -\alpha (R(y))\cdot (R(x)\cdot z) +\alpha (R(y))\cdot (z\cdot R(x))+(R(x)\cdot z)\cdot \alpha (R (y)) -(z\cdot R(x))\cdot \alpha (R (y))\\
&&+R(R(y)\cdot x)\cdot \alpha (z)-R(x\cdot R(y))\cdot \alpha (z)-\alpha (z)\cdot R(R(y)\cdot x)+\alpha (z)\cdot R(x \cdot R(y)).
\end{eqnarray*}
Using the Rota-Baxter identity \ref{RBoperator} we gather the $5^{th}$ and $14^{th}$, $6^{th}$ and $13^{th}$, $7^{th}$ and $16^{th}$, $8^{th}$ and $15^{th}$ terms. Therefore we obtain
\begin{eqnarray*}
&&\alpha (x)\ast (y\ast z)-(x\ast y)\ast \alpha (z) -\alpha (y)\ast (x\ast z)+(y\ast x)\ast \alpha (z)=\\
&& \alpha (R(x))\cdot (R(y)\cdot z) -\alpha (R(x))\cdot (z\cdot R(y))-(R(y)\cdot z)\cdot \alpha (R (x)) +(z\cdot R(y))\cdot \alpha (R (x))\\
&&-(R(x)\cdot R(y))\cdot \alpha (z)+(R(y)\cdot R(x))\cdot \alpha (z)+ \alpha (z)\cdot (R(x)\cdot R(y))-\alpha (z)\cdot (R(y) \cdot R(x))\\
&& -\alpha (R(y))\cdot (R(x)\cdot z) +\alpha (R(y))\cdot (z\cdot R(x))+(R(x)\cdot z)\cdot \alpha (R (y)) -(z\cdot R(x))\cdot \alpha (R (y)).
\end{eqnarray*}
Then Hom-associativity leads to
\begin{eqnarray*}
\alpha (x)\ast (y\ast z)-(x\ast y)\ast \alpha (z) -\alpha (y)\ast (x\ast z)+(y\ast x)\ast \alpha (z)=0.
\end{eqnarray*}
\end{proof}

\begin{proposition}
Let $( A, \cdot, \alpha,R) $ be a  Hom-associative Rota-Baxter algebra where $R$ is  a Rota-Baxter operator of weight $-1$. Assume that $\alpha$ and $R$ commute. We define the operation $\ast$  on $A$ by
\begin{equation}
x\ast y=R(x)\cdot y-y\cdot R(x)-x\cdot y.
\end{equation}
Then $( A, \ast, \alpha) $ is a Hom-preLie algebra.
\end{proposition}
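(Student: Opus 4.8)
The plan is to verify the left Hom-preLie identity in the associator-symmetry form already used for the weight $0$ case, namely
\[
\alpha(x)\ast(y\ast z)-(x\ast y)\ast\alpha(z)=\alpha(y)\ast(x\ast z)-(y\ast x)\ast\alpha(z),
\]
by a direct expansion paralleling the proof of Proposition \ref{HAssRBtoHpLie}, while carefully tracking the extra $-\,x\cdot y$ summand now present in $\ast$. First I would open each of the four $\ast$-products. In $\alpha(x)\ast(y\ast z)$ the inner factor $y\ast z=R(y)\cdot z-z\cdot R(y)-y\cdot z$ is substituted and the outer $\ast$ is expanded, using the hypothesis that $\alpha$ and $R$ commute to rewrite $R(\alpha(x))=\alpha(R(x))$; this produces nine terms, none involving $R$ applied to a product. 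In $(x\ast y)\ast\alpha(z)$, by contrast, $R$ lands on $x\ast y=R(x)\cdot y-y\cdot R(x)-x\cdot y$, generating the three ``double'' terms $R(R(x)\cdot y)$, $R(y\cdot R(x))$ and $R(x\cdot y)$, and similarly after the swap $x\leftrightarrow y$.

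The core step is to collect the terms in which $R$ is applied to a product and contract them using the weight $-1$ Rota-Baxter identity \eqref{RBoperator}, which here reads $R(x)\cdot R(y)=R\bigl(R(x)\cdot y+x\cdot R(y)-x\cdot y\bigr)$. The six such terms group into two triples, $-R(R(x)\cdot y)-R(x\cdot R(y))+R(x\cdot y)=-R(x)\cdot R(y)$ together with its $x\leftrightarrow y$ companion $R(R(y)\cdot x)+R(y\cdot R(x))-R(y\cdot x)=R(y)\cdot R(x)$, so that all $R$-of-a-product contributions collapse to $\bigl(-R(x)\cdot R(y)+R(y)\cdot R(x)\bigr)\cdot\alpha(z)$ together with its mirror $\alpha(z)\cdot\bigl(R(x)\cdot R(y)-R(y)\cdot R(x)\bigr)$. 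The point to emphasise is that the $-\,x\cdot y$ summand carried by $\ast$ supplies, once $R$ is applied, exactly the $R(x\cdot y)$ term demanded by the weight $-1$ identity; this compatibility is precisely what forces the weight to be $-1$.

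It then remains to cancel the surviving terms, each a product of three elements with $\alpha$ applied to one factor. Repeatedly invoking Hom-associativity $\alpha(a)\cdot(b\cdot c)=(a\cdot b)\cdot\alpha(c)$ pairs every remaining summand with a unique one of opposite sign: for instance $(R(x)\cdot R(y))\cdot\alpha(z)=\alpha(R(x))\cdot(R(y)\cdot z)$ matches a term coming from $\alpha(x)\ast(y\ast z)$, the block $\alpha(z)\cdot(\,\cdot\,)$ is absorbed in the same way, and the purely $\alpha$-twisted terms cancel through identities such as $\alpha(x)\cdot(y\cdot z)=(x\cdot y)\cdot\alpha(z)$. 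After all pairings the expression vanishes, which yields the claimed identity and hence the left Hom-preLie structure.

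The main obstacle will be bookkeeping: the four opened products generate on the order of thirty-six terms, and one must organise them so that the six $R$-of-a-product terms are correctly fed into the weight $-1$ identity and the residual terms are paired off by Hom-associativity without sign errors. A more conceptual, though less self-contained, alternative would be to first note that a weight $-1$ Hom-associative Rota-Baxter algebra carries a Hom-tridendriform structure (Definition \ref{def:HomTriDendr}) given by $x\succ y=R(x)\cdot y$, $x\prec y=x\cdot R(y)$ and middle operation $-\,x\cdot y$, for which indeed $x\ast y=x\succ y-y\prec x-x\cdot y$; the statement would then follow from a tridendriform analogue of Proposition \ref{HDendtoHpLie}, extending the dendriform-to-preLie construction to the tridendriform case.
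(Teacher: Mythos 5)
Your proof is correct and follows essentially the same route as the paper's: expand all four $\ast$-products, contract the terms where $R$ is applied to a product using the weight $-1$ Rota-Baxter identity, and cancel everything else by Hom-associativity together with $\alpha\circ R=R\circ\alpha$. The only difference is bookkeeping order (the paper invokes Hom-associativity before the Rota-Baxter identity), and your grouping correctly keeps the $R(x\cdot y)$ and $R(y\cdot x)$ contributions that the weight $-1$ identity requires for the contraction to close.
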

\begin{proof}
For $x,y\in A$ we have
\begin{align*}
\alpha (x)\ast (y\ast z)&= R(\alpha (x))\cdot (R(y)\cdot z -z\cdot R(y)-y\cdot z) -(R(y)\cdot z -z\cdot R(y)-y\cdot z)\cdot R (\alpha (x)),\\
\ &-\alpha (x)\cdot (R(y)\cdot z-z\cdot R(y)-y\cdot z),
\end{align*}
and
\begin{align*}
(x\ast y)\ast \alpha (z)&= R(R(x)\cdot y-y\cdot R(x)-x\cdot y)\cdot \alpha (z) - \alpha (z)\cdot R(R(x)\cdot y-y\cdot R(x)-x\cdot y ),\\
\ &-(R(x)\cdot y-y\cdot R(x)-x\cdot y)\cdot \alpha (z).
\end{align*}
Then using the fact that $\alpha$ and $R$ commute, and the Hom-associativity we obtain
\begin{eqnarray*}
&&\alpha (x)\ast (y\ast z)-(x\ast y)\ast \alpha (z) -\alpha (y)\ast (x\ast z)+(y\ast x)\ast \alpha (z)=\\
&& \alpha (R(x))\cdot (R(y)\cdot z) +(z\cdot R(y))\cdot \alpha (R (x))-R(R(x)\cdot y)\cdot \alpha (z)+R(y\cdot R(x))\cdot \alpha (z)\\
&&+ \alpha (z)\cdot R(R(x)\cdot y)-\alpha (z)\cdot R(y \cdot R(x)) -\alpha (R(y))\cdot (R(x)\cdot z) -(z\cdot R(x))\cdot \alpha (R (y))\\
&&+R(R(y)\cdot x)\cdot \alpha (z)-R(x\cdot R(y))\cdot \alpha (z)-\alpha (z)\cdot R(R(y)\cdot x)+\alpha (z)\cdot R(x \cdot R(y)).
\end{eqnarray*}
Then it vanishes using the Rota-Baxter identity \ref{RBoperator}.
\end{proof}

Now we connect Hom-associative Rota-Baxter  algebras to Hom-dendriform algebras. We generalize to Hom-algebras setting, the result given by Aguiar for weight $0$ Rota-Baxter algebras in \cite{Aguiar} and extended by Ebrahimi-Fard in \cite{KEF1} to any Rota-Baxter algebras.
\begin{proposition}\label{prop4.3}
Let $( A,\cdot, \alpha,R) $ be a  Hom-associative Rota-Baxter algebra where $R$ is a Rota-Baxter operator of weight $0$. Assume that $\alpha$ and $R$ commute. We define the operation $\prec$ and $\succ$ on $A$ by
\begin{equation}
x\prec y=x\cdot R(y)\quad\text{and}\quad x\succ y=R(x)\cdot y.
\end{equation}
Then $( A,\prec, \succ, \alpha) $ is a Hom-dendriform algebra.
\end{proposition}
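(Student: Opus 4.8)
The plan is to verify the three defining identities \eqref{HomDendriCondition1}--\eqref{HomDendriCondition3} of a Hom-dendriform algebra directly, each by a short computation. The ingredients I expect to need are exactly four: the definitions $x\prec y = x\cdot R(y)$ and $x\succ y = R(x)\cdot y$; the commutation $\alpha\circ R = R\circ\alpha$, so that $R(\alpha(x)) = \alpha(R(x))$; the weight-$0$ Rota-Baxter identity \eqref{RBoperator}, which here reads $R(x)\cdot R(y) = R\big(R(x)\cdot y + x\cdot R(y)\big)$; and Hom-associativity \eqref{Hom-ass}. The underlying mechanism is that the Rota-Baxter identity is precisely what converts the inner sum $y\prec z + y\succ z$ (resp.\ $x\prec y + x\succ y$) into a single product of $R$-images, after which Hom-associativity reshuffles the bracketing so the two sides coincide.

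For \eqref{HomDendriCondition1} I would expand the left side using the definitions and the commutation of $\alpha$ with $R$,
\begin{align*}
(x\prec y)\prec \alpha(z) &= (x\cdot R(y))\cdot R(\alpha(z)) = (x\cdot R(y))\cdot \alpha(R(z)),
\end{align*}
and then expand the right side, collapsing the inner sum with the Rota-Baxter identity,
\begin{align*}
\alpha(x)\prec(y\prec z + y\succ z) &= \alpha(x)\cdot R\big(y\cdot R(z) + R(y)\cdot z\big) = \alpha(x)\cdot\big(R(y)\cdot R(z)\big).
\end{align*}
A single application of \eqref{Hom-ass} to $\alpha(x)\cdot(R(y)\cdot R(z)) = (x\cdot R(y))\cdot\alpha(R(z))$ matches the two. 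Identity \eqref{HomDendriCondition3} is handled by the mirror-image computation: Hom-associativity turns the left side into $(R(x)\cdot R(y))\cdot\alpha(z)$, and the Rota-Baxter identity turns the right side $R\big(x\cdot R(y)+R(x)\cdot y\big)\cdot\alpha(z)$ into the same expression.

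Identity \eqref{HomDendriCondition2} is the easiest, requiring no Rota-Baxter identity at all: both sides reduce to $(R(x)\cdot y)\cdot\alpha(R(z))$, the left side immediately and the right side $\alpha(R(x))\cdot(y\cdot R(z))$ after one use of \eqref{Hom-ass} together with the commutation of $\alpha$ and $R$. There is no genuine obstacle here; the only point to watch is bookkeeping of the twisting map, namely using $\alpha\circ R = R\circ\alpha$ at the right moment so that the $\alpha$'s on the two sides are aligned before \eqref{Hom-ass} is invoked, and invoking Hom-associativity in the correct direction. I would therefore present the three verifications as three short displayed computations and remark that the commutation hypothesis on $\alpha$ and $R$ is used precisely to place the twist consistently.
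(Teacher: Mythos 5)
Your proposal is correct and follows essentially the same route as the paper: both verify the three Hom-dendriform identities directly, using the commutation $\alpha\circ R=R\circ\alpha$ to align the twists, the weight-$0$ Rota-Baxter identity to collapse the sums $y\prec z+y\succ z$ and $x\prec y+x\succ y$ into $R(y)\cdot R(z)$ and $R(x)\cdot R(y)$ respectively, and one application of Hom-associativity per identity. Your observation that the middle identity \eqref{HomDendriCondition2} needs only Hom-associativity and the commutation, not the Rota-Baxter relation, is exactly what happens in the paper's computation as well.
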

\begin{proof}Let $x,y,z\in A$, we have by using Hom-associativity, identity \eqref{RBoperator} and the fact that  $\alpha$ and $R$ commute:
\begin{eqnarray*} (x\prec y)\prec \alpha (z)-\alpha(x)\prec(y\prec z +y\succ z)&=&
(x\cdot R(y))\cdot R(\alpha (z))-\alpha (x)\cdot R( y\cdot R(z)+R(y)\cdot z),\\
\ &=&(x\cdot R(y))\cdot \alpha (R(z))-\alpha (R(x))\cdot( R(y)\cdot R(z)),\\ \ &=&0
\end{eqnarray*}
\begin{eqnarray*}
 (x\succ y)\prec \alpha (z)-\alpha(x)\succ(y\prec z)&=&(R(x)\cdot y)\cdot R(\alpha (z))-R(\alpha (x))\cdot( y\cdot R(z))\\
\ &=&(R(x)\cdot y)\cdot \alpha (R(z))-\alpha (R(x))\cdot( y\cdot R(z)),\\ \ &=&0.
\end{eqnarray*}
\begin{eqnarray*} \alpha(x)\succ(y\succ z)-(x\prec y+x\succ y)\succ \alpha (z)&=&R(\alpha(x))\cdot(R(y)\cdot z)-R(x\cdot R(y)+R(x)\cdot y)\cdot \alpha (z),\\
\ &=&\alpha(R(x))\cdot(R(y)\cdot z)-(R(x)\cdot R(y))\cdot \alpha (z),\\
\ &=&0.
\end{eqnarray*}

\end{proof}

\begin{proposition}\label{pro4.4}
Let $( A,\cdot, \alpha,R) $ be a  Hom-associative Rota-Baxter algebra where $R$ is  a Rota-Baxter operator of weight $\theta$. Assume that $\alpha$ and $R$ commute. We define the operation $\prec$, $\succ$ and $\bullet$ on $A$ by
\begin{equation}
x\prec y=x\cdot R(y)+\theta x\cdot y,\quad\text{and}\quad x\succ y=R(x)\cdot y.
\end{equation}
Then $( A,\prec, \succ, \bullet,\alpha) $ is a Hom-dendriform algebra.
\end{proposition}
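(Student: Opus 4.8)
The plan is to verify directly the three defining identities \eqref{HomDendriCondition1}, \eqref{HomDendriCondition2}, \eqref{HomDendriCondition3} of a Hom-dendriform algebra for the data $(A,\prec,\succ,\alpha)$, imitating the weight-$0$ computation of Proposition \ref{prop4.3}; since the assertion is that we obtain a \emph{Hom-dendriform} algebra, only $\prec$ and $\succ$ enter and these three identities are exactly what must be checked. The tools are the Hom-associativity \eqref{Hom-ass}, the weight-$\theta$ Rota-Baxter identity \eqref{RBoperator}, and the hypothesis $\alpha\circ R=R\circ\alpha$, which in particular gives $R(\alpha(z))=\alpha(R(z))$. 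The uniform procedure is to substitute $x\prec y=x\cdot R(y)+\theta\,(x\cdot y)$ and $x\succ y=R(x)\cdot y$, expand each side into a sum of triple products, and match them term by term.

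The structural fact that drives every reduction is that \eqref{RBoperator} can be read as a collapse: because $y\prec z+y\succ z=R(y)\cdot z+y\cdot R(z)+\theta\,(y\cdot z)$, the Rota-Baxter identity yields $R(y\prec z+y\succ z)=R(y)\cdot R(z)$. Hence whenever the inner combination $y\prec z+y\succ z$ is fed to $R$ or to $\succ$, it is replaced by the single product $R(y)\cdot R(z)$, after which one application of \eqref{Hom-ass} aligns the two sides. For \eqref{HomDendriCondition3} this is immediate: the left side is $\alpha(R(x))\cdot(R(y)\cdot z)$, the right side collapses to $(R(x)\cdot R(y))\cdot\alpha(z)$, and these agree by \eqref{Hom-ass}. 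Identity \eqref{HomDendriCondition2} is even lighter and uses no Rota-Baxter identity: its left side is already $(R(x)\cdot y)\cdot\alpha(R(z))+\theta\,(R(x)\cdot y)\cdot\alpha(z)$, and its right side reduces to the same expression by applying \eqref{Hom-ass} to the two summands coming from $\prec$.

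The only genuinely laborious identity, and the step I expect to be the main obstacle, is \eqref{HomDendriCondition1}, because the weight term $\theta\,(x\cdot y)$ occurs both inside $\prec$ on the left and inside the inner sum on the right, so the expansion produces terms that are linear and quadratic in $\theta$. Concretely, the left side $(x\prec y)\prec\alpha(z)$ expands into four summands, while the right side $\alpha(x)\prec(y\prec z+y\succ z)$ splits into an $R$-part, disposed of by the collapse above followed by one use of \eqref{Hom-ass}, and a $\theta$-part, disposed of by applying \eqref{Hom-ass} to each of its three summands. The plan is then to check that both sides reduce to the same four terms $(x\cdot R(y))\cdot\alpha(R(z))$, $\theta\,(x\cdot y)\cdot\alpha(R(z))$, $\theta\,(x\cdot R(y))\cdot\alpha(z)$ and $\theta^2\,(x\cdot y)\cdot\alpha(z)$. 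The risk is purely bookkeeping, namely keeping the coefficients $\theta$ and $\theta^2$ correctly aligned across the two sides. Since \eqref{HomDendriCondition2} and \eqref{HomDendriCondition3} carry no $\theta^2$ term and each closes after at most one use of \eqref{RBoperator} and one of \eqref{Hom-ass}, the whole verification is routine once \eqref{HomDendriCondition1} has been organized carefully.
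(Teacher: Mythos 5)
Your proposal is correct and takes essentially the same route as the paper: a direct verification of the three Hom-dendriform axioms \eqref{HomDendriCondition1}--\eqref{HomDendriCondition3} using Hom-associativity \eqref{Hom-ass}, the weight-$\theta$ Rota-Baxter identity in the collapsed form $R(y\prec z+y\succ z)=R(y)\cdot R(z)$, and the commutation $\alpha\circ R=R\circ\alpha$. The paper in fact only writes out the middle identity \eqref{HomDendriCondition2} and declares the remaining axioms a straightforward calculation, so your plan (including the correct four-term reduction of \eqref{HomDendriCondition1} with its $\theta$ and $\theta^2$ terms) is simply a fuller version of the same argument.
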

\begin{proof}A straightforward calculation permits to check the axioms. For example
\begin{align*}
 (x\succ y)\prec \alpha (z)-\alpha(x)\succ(y\prec z)&=& &(R(x)\cdot y)\prec \alpha (z)-\alpha (x)\succ( y\cdot R(z)+\theta y\cdot z),\\
\ &=& &(R(x)\cdot y)\cdot R(\alpha (z))+\theta(R(x)\cdot y)\cdot \alpha (z)-R(\alpha (x))\cdot( y\cdot R(z)+\theta y\cdot z),\\
\ &=& & 0.
\end{align*}
\end{proof}
\begin{remark}
Proposition \ref{HAssRBtoHpLie} could be obtained as a corollary of Proposition \ref{prop4.3}  and  Proposition \ref{HDendtoHpLie} which leads  to a construction of right Hom-preLie algebra with the following multiplication
$$x\lhd y=x\cdot R(y)- R(y)\cdot x.
$$
\end{remark}

Considering the associated categories and denoting by $HRBass_\theta$ the category of Hom-associative Rota-Baxter algebras, $HpreLie$ the category of preLie algebras and $Hdend$ the category of Hom-dendriform algebras, we summarize the previous results in  the following proposition

\begin{proposition}
The following diagram is commutative
\[
\begin{array}{ccc}
  Hdend&  \rightarrow & HpreLie  \\
 \downarrow & \  &    \downarrow \\
HRBass_0  & \rightarrow  & HpreLie
\end{array}
\]

\end{proposition}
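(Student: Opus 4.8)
The plan is to prove commutativity by a direct comparison of the bracket operations obtained along the two paths, exploiting the fact that all four functors leave the underlying vector space and the twisting map $\alpha$ untouched and merely reassemble the products. I would fix an arbitrary object of $HRBass_0$, namely a Hom-associative Rota-Baxter algebra $(A,\cdot,\alpha,R)$ of weight $0$ with $\alpha$ and $R$ commuting, and chase it around the square, recalling that the right vertical arrow is the identity on $HpreLie$.

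Tracing along the left-then-top path, I would first apply Proposition \ref{prop4.3} to equip $A$ with the Hom-dendriform operations $x\prec y=x\cdot R(y)$ and $x\succ y=R(x)\cdot y$, and then apply the functor of Proposition \ref{HDendtoHpLie} to extract the associated left Hom-preLie product
\[
x\rhd y=x\succ y-y\prec x=R(x)\cdot y-y\cdot R(x).
\]
Along the bottom path, Proposition \ref{HAssRBtoHpLie} produces directly the left Hom-preLie product
\[
x\ast y=R(x)\cdot y-y\cdot R(x).
\]
Comparing the two displays shows $x\rhd y=x\ast y$, so the two functors agree on objects; this is precisely the identity already recorded in the remark preceding the statement.

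It then remains to verify compatibility on morphisms, which I expect to be automatic rather than an obstacle. A morphism in each of these categories is a linear map intertwining $\alpha$ together with the relevant products, and since every functor in the square rebuilds its new operations solely out of $\cdot$, $R$ and $\alpha$ without modifying the linear map itself, any $HRBass_0$-morphism is sent to the very same $HpreLie$-morphism along both routes. The only point genuinely demanding care is bookkeeping: one must select the \emph{left} Hom-preLie structure from the two produced by Proposition \ref{HDendtoHpLie}, so that its sign convention matches that of $x\ast y$ in Proposition \ref{HAssRBtoHpLie}; with that choice the square commutes on objects and morphisms alike.
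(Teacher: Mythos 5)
Your proof is correct and takes essentially the same route as the paper, which offers no separate argument but treats the proposition as a summary of Propositions \ref{prop4.3}, \ref{HDendtoHpLie} and \ref{HAssRBtoHpLie}: the whole content is the identity $x\succ y - y\prec x = R(x)\cdot y - y\cdot R(x) = x\ast y$, which is precisely the observation recorded in the remark preceding the statement, and which you verify explicitly. Your reading of the left vertical arrow as the functor $HRBass_0 \rightarrow Hdend$ of Proposition \ref{prop4.3} (despite its being typeset as a downward arrow) is the only one consistent with the functors actually constructed, and your morphism-level check is the routine verification the paper leaves implicit.
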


We show now a connection between  Hom-associative Rota-Baxter  algebras and Hom-tridendriform algebras. The classical case was stated in  \cite{KEF1}.
\begin{proposition}
Let $( A,\cdot, \alpha,R) $ be a  Hom-associative Rota-Baxter algebra where $R$ is  a Rota-Baxter operator of weight $\theta$. Assume that $\alpha$ and $R$ commute. We define the operation $\prec$, $\succ$ and $\bullet$ on $A$ by
\begin{equation}
x\prec y=x\cdot R(y),\quad\quad x\succ y=R(x)\cdot y\quad\text{and}\quad x\bullet y=\theta x\cdot y.
\end{equation}
Then $( A,\prec, \succ, \bullet,\alpha) $ is a Hom-tridendriform algebra.
\end{proposition}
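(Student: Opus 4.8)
The plan is to verify the seven defining identities \eqref{HomTriDendriCondition1}--\eqref{HomTridDendriCondition7} one at a time by substituting the definitions $x\prec y = x\cdot R(y)$, $x\succ y = R(x)\cdot y$ and $x\bullet y = \theta\, x\cdot y$, and reducing each to a triviality using only three ingredients: the Hom-associativity \eqref{Hom-ass} of $\cdot$, the hypothesis that $\alpha$ and $R$ commute, and the Rota-Baxter identity \eqref{RBoperator}. One notational point must be settled first: in Definition \ref{def:HomTriDendr} the third tridendriform operation is written $\cdot$, whereas here $\cdot$ already denotes the underlying Hom-associative product, so throughout I read the axioms with the symbol $\bullet$ in place of the tridendriform dot. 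Note that multiplicativity of $\alpha$ is never needed; the commutation of $\alpha$ and $R$ enters only to rewrite $R(\alpha(z)) = \alpha(R(z))$ (or $\alpha(R(x)) = R(\alpha(x))$) at the spots where $R$ is forced to act on an $\alpha$-image.

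Next I would dispatch the five ``easy'' identities \eqref{HomTridDendriCondition2} and \eqref{HomTridDendriCondition4}--\eqref{HomTridDendriCondition7}. After substitution, each of these pulls any factor of $\theta$ (or $\theta^2$ in \eqref{HomTridDendriCondition7}) out in front and then reduces to a single application of Hom-associativity \eqref{Hom-ass}. For instance \eqref{HomTridDendriCondition5} reads
\begin{equation*}
(x\succ y)\bullet\alpha(z) = \theta\,(R(x)\cdot y)\cdot\alpha(z) = \theta\,\alpha(R(x))\cdot(y\cdot z) = R(\alpha(x))\cdot(\theta\, y\cdot z) = \alpha(x)\succ(y\bullet z),
\end{equation*}
where the middle equality is \eqref{Hom-ass} and the last rewriting uses $\alpha R = R\alpha$. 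Identities \eqref{HomTridDendriCondition2}, \eqref{HomTridDendriCondition4}, \eqref{HomTridDendriCondition6} and \eqref{HomTridDendriCondition7} follow in exactly the same one-line pattern, the commutation being invoked in \eqref{HomTridDendriCondition6} and \eqref{HomTridDendriCondition2} and being unnecessary in \eqref{HomTridDendriCondition4} and \eqref{HomTridDendriCondition7}.

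The genuinely informative steps are \eqref{HomTriDendriCondition1} and \eqref{HomTriDendriCondition3}, and here I expect the only real (if modest) obstacle: recognizing that the three-term sum appearing inside $R$ is precisely the right-hand side of the Rota-Baxter identity, so that it collapses to a product of two $R$-images. For \eqref{HomTriDendriCondition1}, the right side expands to $\alpha(x)\cdot R\big(y\cdot R(z) + R(y)\cdot z + \theta\, y\cdot z\big)$, and \eqref{RBoperator} turns the bracketed argument of $R$ into $R(y)\cdot R(z)$, giving $\alpha(x)\cdot(R(y)\cdot R(z))$; the left side is $(x\cdot R(y))\cdot\alpha(R(z))$, equal to the same expression by \eqref{Hom-ass}. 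Identity \eqref{HomTriDendriCondition3} is the mirror computation: the left side is $\alpha(R(x))\cdot(R(y)\cdot z)$, while the right side is $R\big(x\cdot R(y) + R(x)\cdot y + \theta\, x\cdot y\big)\cdot\alpha(z)$, which collapses by \eqref{RBoperator} to $(R(x)\cdot R(y))\cdot\alpha(z)$ and then matches the left side by Hom-associativity. Beyond spotting this collapse the whole argument is pure bookkeeping: keeping the two products $\cdot$ and $\bullet$ distinct and invoking $\alpha R = R\alpha$ only where $R$ meets an $\alpha$-image.
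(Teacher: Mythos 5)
Your proposal is correct: all seven identities \eqref{HomTriDendriCondition1}--\eqref{HomTridDendriCondition7} do hold for these operations, and your accounting of the ingredients is accurate --- Hom-associativity \eqref{Hom-ass} in every axiom, the commutation $\alpha\circ R=R\circ\alpha$ exactly where $R$ lands on an $\alpha$-image, and the Rota-Baxter identity \eqref{RBoperator} only in \eqref{HomTriDendriCondition1} and \eqref{HomTriDendriCondition3}. Where you differ from the paper is in the division of labor. The paper verifies directly only the identities involving $\bullet$ (its displayed example is \eqref{HomTridDendriCondition4}) and refers the first three to Proposition \ref{pro4.4}, i.e.\ to the Hom-dendriform structure $x\prec' y=x\cdot R(y)+\theta\, x\cdot y$, $x\succ' y=R(x)\cdot y$. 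That reference settles \eqref{HomTriDendriCondition3} at once, since $\prec'=\prec+\bullet$ and the third dendriform axiom for $(\prec',\succ')$ is verbatim the tridendriform axiom \eqref{HomTriDendriCondition3}; but \eqref{HomTriDendriCondition1} and \eqref{HomTridDendriCondition2} follow from Proposition \ref{pro4.4} only after subtracting the $\bullet$-identities, because the first dendriform axiom for $\prec'$ is the sum of \eqref{HomTriDendriCondition1}, \eqref{HomTridDendriCondition4}, \eqref{HomTridDendriCondition6}, \eqref{HomTridDendriCondition7}, and the second is the sum of \eqref{HomTridDendriCondition2} and \eqref{HomTridDendriCondition5}. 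Your direct verification of all seven axioms sidesteps this (unstated) bookkeeping and yields a self-contained proof that pinpoints the only two places where the weight-$\theta$ Rota-Baxter identity is genuinely needed; the cost is redoing computations the paper reuses, and in exchange your argument returns Proposition \ref{pro4.4} itself as a corollary by summing the appropriate axioms.
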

\begin{proof}
The first three axioms follow from Proposition \ref{pro4.4} and the last use Hom-associativity and the commutation between $R$ and $\alpha$. For example
\begin{align*}
 (x\prec y)\bullet \alpha (z)-\alpha(x)\bullet(y\succ z)=\theta (x\cdot R( y))\cdot \alpha (z)-\theta\alpha (x)\cdot( R(y)\cdot  z)=0.
\end{align*}

\end{proof}
Following Proposition \ref{HTridendToHAss}, we derive a new Hom-associative multiplication defined by
$$x\ast y=x\cdot R(y)+R(x)\cdot y+\theta x\cdot y.
$$

As in the classical case it satisfies
$$R(x\ast y)=R(x)\cdot R(y)\quad\text{and}\quad \widetilde{R}(x\ast y)=-\widetilde{R}(x)\cdot \widetilde{R}(y)
$$
where $\widetilde{R}(x)=-\theta x-R(x).$ \\

\section{Rota-Baxter operators and Hom-Nonassociative algebras }
Rota-Baxter operator  in the context of Lie algebras were introduced
independently by Belavin and Drinfeld and Semenov-Tian-Shansky \cite{Drinfeld,semenov} in the 1980th
and were related to solutions of the (modified) classical Yang-Baxter
equation. The theory were developed later  by Ebrahimi-Fard see \cite{KEF1}.

We may extend the theory of Hom-associative Rota-Baxter algebras developed above to any Hom-Nonassociative algebra. We set the following definition
\begin{definition}
A  Hom-Nonassociative Rota-Baxter algebra is a Hom-Nonassociative algebra $( A, [\ ,\ ], \alpha) $  endowed with a linear map $R: A\rightarrow A$ subject to the relation
\begin{equation}\label{RBoperatorNonAss}
[R(x), R(y)]=R([R(x), y]+[x, R(y)]+ \theta [x,y]),
\end{equation}
where $\theta\in\K$.

The map $R$ is called \emph{Rota-Baxter operator} of weight $\theta$.
\end{definition}

We obtain the following construction by composition of Hom-Lie Rota-Baxter algebras, extending the construction of Hom-Lie algebras given by Yau in \cite{Yau:homology} to Rota-Baxter algebras.
\begin{theorem}\label{thmYauConstrHomLie}
Let $(\g,[~,~],R)$ be a Lie Rota-Baxter algebra and $\alpha :
\g\rightarrow \g$ be a Lie algebra endomorphism commuting with $R$. Then
$(\g,[~,~]_\alpha,\alpha,R)$, where $[~,~]_\alpha=\alpha\circ[~,~]$,  is a Hom-Lie Rota-Baxter algebra.
\end{theorem}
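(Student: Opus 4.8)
The plan is to split the verification into the two defining requirements, exactly paralleling the proof of Theorem \ref{thm:GAssSALmorphism} in the associative setting: first that $(\g, [\cdot,\cdot]_\alpha, \alpha)$ is a Hom-Lie algebra, and second that $R$ is still a Rota-Baxter operator of weight $\theta$ for the twisted bracket $[\cdot,\cdot]_\alpha$.

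For the Hom-Lie structure I would simply invoke Yau's composition construction \cite{Yau:homology}. Since $[\cdot,\cdot]_\alpha = \alpha \circ [\cdot,\cdot]$ and $\alpha$ is a Lie algebra endomorphism, skew-symmetry is immediate and the Hom-Jacobi condition \eqref{HomJacobiCondition} is obtained by applying $\alpha^2$ to the ordinary Jacobi identity (using $\alpha([x,y]) = [\alpha(x),\alpha(y)]$ to move the outer twists inside). No new argument is needed here beyond citing this result.

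The substantive part is the Rota-Baxter identity \eqref{RBoperatorNonAss}. First I would expand the left-hand side as $[R(x),R(y)]_\alpha = \alpha([R(x),R(y)])$ and substitute the original identity $[R(x),R(y)] = R([R(x),y] + [x,R(y)] + \theta[x,y])$ inside $\alpha$. The key move is then to use the commutation hypothesis $\alpha \circ R = R \circ \alpha$ to pull $\alpha$ through $R$, yielding $R\big(\alpha([R(x),y]) + \alpha([x,R(y)]) + \theta\,\alpha([x,y])\big)$. Recognising each $\alpha([\,\cdot\,,\,\cdot\,])$ as the twisted bracket $[\cdot,\cdot]_\alpha$, this is exactly $R\big([R(x),y]_\alpha + [x,R(y)]_\alpha + \theta[x,y]_\alpha\big)$, which is the desired relation.

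I do not expect any genuine obstacle: the computation is a short formal manipulation, and the only delicate point is that the commutation of $\alpha$ and $R$ must be invoked precisely once, to transpose $\alpha$ across $R$, while the weight-$\theta$ term is transported unchanged by linearity. This is the Lie-theoretic analogue of the verification already carried out for the associative case in Theorem \ref{thm:GAssSALmorphism}, so the argument is essentially formal.
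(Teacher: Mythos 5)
Your proposal is correct and follows essentially the same route as the paper's own proof: the Hom-Lie structure is obtained by applying $\alpha^2$ to the ordinary Jacobi identity (the paper writes out $[\alpha(x),[y,z]_\alpha]_\alpha=\alpha^2[x,[y,z]]$ explicitly rather than merely citing Yau, but the argument is identical), and the Rota-Baxter identity is verified by exactly your computation, substituting the classical identity inside $\alpha$ and using $\alpha\circ R=R\circ\alpha$ once to move $\alpha$ past $R$. No gaps.
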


\begin{proof}
Observe that $[\alpha (x),[y,z]_\alpha]_\alpha =\alpha [\alpha
(x),\alpha[y,z]]=\alpha^2 [x,[y,z]] $. Therefore the Hom-Jacobi
identity for $\mathfrak{g}_\alpha=(\g,[~,~]_\alpha,\alpha)$ follows
obviously from the Jacobi identity of $(\g,[~,~])$. The
skew-symmetry is proved similarly.

Now we check that $R$ is still a Rota-Baxter operator for the Hom-Lie  algebra.	
 \begin{eqnarray*}
[R(x),  R(y)]_\alpha &=&\alpha [R(x), R(y)],\\
\ &=&\alpha(R([R(x), y]+[x, R(y)]+\theta [x, y])),\\
\ &=&\alpha(R([R(x), y]))+\alpha(R([x, R(y)]))+\alpha(R(\theta [x, y])).\\
\end{eqnarray*}
Since $\alpha$ and $R$ commute then
\begin{eqnarray*}
[R(x),  R(y)]_\alpha&=&R(\alpha([R(x), y]))+R(\alpha([x, R(y)]))+R(\alpha(\theta [x, y])),\\
\ &=& R([R(x), y]_\alpha+[x,R(y)]_\alpha+\theta [x, y]_\alpha).\\
\end{eqnarray*}
%\end{proof}
\end{proof}

\begin{remark}
In particular  the proposition is valid  when $\alpha$ is an involution.
\end{remark}

Let $(\g,[~,~],\alpha)$ be a  Hom-Lie algebra. It was observed in \cite{Gohr} that in case $\alpha$ is invertible, the composition method using $\alpha^{-1}$ leads to a Lie algebra.

\begin{proposition}\label{CompoMethodLie}
Let $(\g,[\ ,\ ] ,\alpha,R)$ be a   Hom-Lie Rota-Baxter algebra such that $\alpha$ and $R$ commute.  Then  $(\g,[~,~]_{\alpha^{-1}}=\alpha^{-1}\circ [~,~],R)$ is a Lie Rota-Baxter algebra.
\end{proposition}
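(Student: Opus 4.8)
The plan is to check two things independently: that the untwisted bracket $[~,~]_{\alpha^{-1}} = \alpha^{-1}\circ[~,~]$ defines an ordinary (untwisted) Lie algebra, and that $R$ survives as a Rota-Baxter operator of the same weight for this bracket. The first part is precisely the untwisting observed by Gohr in \cite{Gohr}, which I would derive directly from the Hom-Jacobi identity using the multiplicativity of $\alpha$ (implicit in the hypothesis, just as in the associative analogue stated earlier). The second part is a short manipulation resting on the commutation of $R$ with $\alpha^{-1}$.

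For the Lie structure, skew-symmetry of $[~,~]_{\alpha^{-1}}$ is immediate from linearity of $\alpha^{-1}$ and $[x,y]=-[y,x]$. For the Jacobi identity I would first note that multiplicativity of $\alpha$ transfers to its inverse, $\alpha^{-1}([x,y])=[\alpha^{-1}(x),\alpha^{-1}(y)]$, and then compute a single cyclic term:
$$[[x,y]_{\alpha^{-1}}, z]_{\alpha^{-1}} = \alpha^{-1}([\alpha^{-1}[x,y], z]) = [\alpha^{-2}[x,y], \alpha^{-1}(z)] = \alpha^{-2}([[x,y], \alpha(z)]) = -\alpha^{-2}([\alpha(z),[x,y]]).$$
Summing over the cyclic permutations of $x,y,z$ and factoring out $-\alpha^{-2}$, the surviving bracket is exactly $\circlearrowleft_{x,y,z}[\alpha(x),[y,z]]$, which vanishes by the Hom-Jacobi condition \eqref{HomJacobiCondition}. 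Hence the ordinary Jacobi identity holds and $(\g,[~,~]_{\alpha^{-1}})$ is a Lie algebra.

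For the Rota-Baxter property, since $\alpha R = R\alpha$ with $\alpha$ invertible, conjugating gives $\alpha^{-1}R = R\alpha^{-1}$. Applying $\alpha^{-1}$ to the Hom-Rota-Baxter identity \eqref{RBoperatorNonAss} and using this commutation together with linearity of $\alpha^{-1}$ yields
$$[R(x),R(y)]_{\alpha^{-1}} = \alpha^{-1}(R([R(x),y]+[x,R(y)]+\theta[x,y])) = R([R(x),y]_{\alpha^{-1}}+[x,R(y)]_{\alpha^{-1}}+\theta[x,y]_{\alpha^{-1}}),$$
which is the Rota-Baxter identity of weight $\theta$ for the untwisted bracket. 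This completes the verification.

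The only delicate point is the Jacobi computation, whose middle equalities genuinely require multiplicativity of $\alpha$; this assumption is needed for Gohr's untwisting to apply (exactly as in the multiplicative hypothesis of the associative counterpart), even though it is not explicitly repeated in the statement. Everything else reduces to routine use of linearity, skew-symmetry, and the commutation of $R$ with $\alpha^{-1}$.
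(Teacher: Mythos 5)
Your proof is correct and follows essentially the same route as the paper's: untwist the Jacobi identity using multiplicativity of $\alpha^{-1}$ to reduce it to the Hom-Jacobi condition \eqref{HomJacobiCondition}, and obtain the Rota-Baxter identity for $[~,~]_{\alpha^{-1}}$ from the fact that $R$ commutes with $\alpha^{-1}$. The only differences are cosmetic — you work with $[[x,y]_{\alpha^{-1}},z]_{\alpha^{-1}}$ rather than $[x,[y,z]_{\alpha^{-1}}]_{\alpha^{-1}}$, write out the Rota-Baxter verification that the paper leaves implicit, and rightly flag that invertibility and multiplicativity of $\alpha$ are tacit hypotheses carried over from the associative analogue.
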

\begin{proof}The Jacobi identity follows from
\begin{align*}
\circlearrowleft_{x,y,z}{[x,[y ,z]_{\alpha^{-1}}]_{\alpha^{-1}}}=\circlearrowleft_{x,y,z}{\alpha^{-1}( [x,\alpha^{-1} ([y,z] ) ])}=\circlearrowleft_{x,y,z}{{\alpha^{-2}}[\alpha (x),[y,z]]}=0.
\end{align*}
Since  $\alpha$ and $R$ commute then  $\alpha^{-1}$ and $R$ commute as well. Hence $R$ is a Rota-Baxter operator for the new multiplication.
\end{proof}

We  construct now  new Hom-Lie Rota Baxter algebras  from a given multiplicative  Hom-Lie Rota-Baxter  algebra   using $n$th derived Hom-Lie algebras.

\begin{definition}[\cite{Yau4}]\label{defiNDerivedHom} Let  $\left( \g,[\ ,\ ] ,\alpha \right) $ be a multiplicative Hom-Lie algebra and $n\geq 0$. The  $n$th derived Hom-algebra  of $\g$ is  defined by
\begin{equation}\label{DerivedHomAlgtype1}
\g _{(n)}=\left( \g,[\ ,\ ]^{(n)}=\alpha^{n }\circ[\ ,\ ] ,\alpha^{n+1} \right),
\end{equation}
Note that $\g_{(0)}=\g$ and  $\g_{(1)}=\left( \g,[\ ,\ ]^{(1)}=\alpha\circ[\ ,\ ] ,\alpha^{2} \right)$.
% and $\g_{(2n+1)}=(\g_{(n)})_{(1)}.?????$
\end{definition}
Observe that for $n\geq 1$ and $x,y,z\in \g$ we have
\begin{eqnarray*}
[[x,y]^{(n)},\alpha^{n+1}(z)]^{(n)}&=& \alpha^{n }([\alpha^{n }([x,y]),\alpha^{n+1}(z)])\\
\ &=& \alpha^{2n }([[x,y],\alpha(z)]).
\end{eqnarray*}

\begin{theorem}\label{ThmConstrNthDerivedLie}
Let   $\left( \g,[\ ,\ ] ,\alpha,R \right) $ be a multiplicative Hom-Lie Rota-Baxter algebra and assume that $\alpha$ and $R$ commute. Then its  $n$th derived Hom-algebra  is  a Hom-Lie Rota-Baxter algebra.
\end{theorem}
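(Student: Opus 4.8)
The plan is to follow the same two-part strategy used in Theorem \ref{thmYauConstrHomLie} and Theorem \ref{ThmConstrNthDerivedAss}: first verify that the $n$th derived Hom-algebra $\g_{(n)}=(\g,[\ ,\ ]^{(n)}=\alpha^{n}\circ[\ ,\ ],\alpha^{n+1})$ carries a Hom-Lie structure, and then check separately that $R$ remains a Rota-Baxter operator for the twisted bracket $[\ ,\ ]^{(n)}$. The first part is already essentially recorded in the observation preceding the statement, which gives $[[x,y]^{(n)},\alpha^{n+1}(z)]^{(n)}=\alpha^{2n}([[x,y],\alpha(z)])$; by the analogous computation for the cyclically permuted terms and the multiplicativity of $\alpha$, the Hom-Jacobi identity for $\g_{(n)}$ reduces, after factoring out $\alpha^{2n}$, to the Hom-Jacobi identity of the original $\g$. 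Skew-symmetry of $[\ ,\ ]^{(n)}=\alpha^{n}\circ[\ ,\ ]$ is immediate from that of $[\ ,\ ]$, so this part is routine and I would dispatch it by citing the displayed identity.

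For the Rota-Baxter part, the key point is that $R$ commutes with $\alpha$, hence with every power $\alpha^{n}$. First I would write $[R(x),R(y)]^{(n)}=\alpha^{n}([R(x),R(y)])$ and apply the weight-$\theta$ Rota-Baxter identity \eqref{RBoperatorNonAss} for the untwisted bracket to expand the right-hand side as $\alpha^{n}(R([R(x),y]+[x,R(y)]+\theta[x,y]))$. Then I would push $\alpha^{n}$ inside using the commutation of $\alpha$ and $R$, obtaining $R(\alpha^{n}([R(x),y])+\alpha^{n}([x,R(y)])+\theta\,\alpha^{n}([x,y]))$. Finally I would reassemble each $\alpha^{n}([\,\cdot\,,\,\cdot\,])$ as $[\,\cdot\,,\,\cdot\,]^{(n)}$, yielding exactly
\[
[R(x),R(y)]^{(n)}=R\bigl([R(x),y]^{(n)}+[x,R(y)]^{(n)}+\theta[x,y]^{(n)}\bigr),
\]
which is the Rota-Baxter identity for the derived bracket. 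This mirrors line-for-line the computation in the proof of Theorem \ref{thmYauConstrHomLie}, the only difference being the extra factor $\alpha^{n}$ which slides harmlessly through $R$.

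I do not expect any genuine obstacle here: the entire argument rests on the single hypothesis that $\alpha$ and $R$ commute, which propagates to all powers of $\alpha$, together with the multiplicativity needed to rewrite the twisted bracket. The closest thing to a subtlety is purely bookkeeping, namely keeping track of the correct powers of $\alpha$ when collapsing $[\,\cdot\,,\,\cdot\,]^{(n)}=\alpha^{n}\circ[\,\cdot\,,\,\cdot\,]$ at each occurrence; once one observes that every term on both sides acquires precisely the same power of $\alpha$ relative to the untwisted identity, the verification is mechanical. The case $n=0$ recovers the original algebra and $n=1$ recovers $\g_{(1)}$, consistent with Definition \ref{defiNDerivedHom}.
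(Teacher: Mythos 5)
Your proposal is correct and follows essentially the same route as the paper: the Hom-Lie structure of the derived algebra is exactly the content of the observation $[[x,y]^{(n)},\alpha^{n+1}(z)]^{(n)}=\alpha^{2n}([[x,y],\alpha(z)])$ (which the paper dispatches by citing \cite{Yau4}), and the Rota-Baxter part is precisely the paper's one-line computation of applying $\alpha^{n}$ to the untwisted identity \eqref{RBoperatorNonAss} and sliding it through $R$ by the commutation hypothesis. Your version merely spells out the reassembly of each $\alpha^{n}([\,\cdot\,,\,\cdot\,])$ as $[\,\cdot\,,\,\cdot\,]^{(n)}$, which the paper leaves implicit.
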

\begin{proof}
The $n$th derived Hom-algebra is a Hom-Lie algebra according to \cite{Yau4}.

It is also a Rota-Baxter algebra since
$$
\alpha^n([R(x),R(y)])=\alpha^n(R([x, R(y)])+R([R(x), y])+\lambda R([x, y])).
$$
\end{proof}

%\begin{remark}
%Similar result holds for Hom-associative algebras.
%\end{remark}
In the following we construct  Hom-Lie Rota-Baxter algebras involving elements of the centroid of Lie Rota-Baxter algebras.  Let   $(\mathfrak{g}, [\cdot, \cdot],R)$ be a Lie Rota-Baxter algebra. An endomorphism  $\alpha\in End (\g) $ is said to be an element of the centroid if $\alpha[x,y]=[\alpha(x),y]$  for any $x,y\in\g$. The centroid is defined by
$$Cent (\g )=\{ \alpha\in End (\g) : \alpha [x,y]=[\alpha (x),y], \  \forall x,y\in\g\}.
$$
The same definition of the centroid  is assumed for Hom-Lie Rota-Baxter algebra.
\begin{proposition}
Let   $(\mathfrak{g}, [\cdot, \cdot],R)$ be a Lie Rota-Baxter algebra where $R$ is a Rota-Baxter operator of weight $\theta$. Let   $\alpha\in Cent (\g) $ and  set for $x,y\in\g$
\begin{align*}
[x,y]_\alpha^1=[\alpha (x),y]\quad \text{and} \quad
[x,y]_\alpha^2 =[\alpha (x),\alpha (y)].
\end{align*}
Assume that $\alpha$ and $R$ commute. Then $(\mathfrak{g},[\cdot, \cdot ]_\alpha^1,\alpha,R )$ and $(\mathfrak{g},[\cdot, \cdot]_\alpha^2,\alpha ,R)$ are Hom-Lie Rota-Baxter algebras.
\end{proposition}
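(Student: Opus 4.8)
The plan is to verify, for each of the two brackets, the two defining features of a Hom-Lie Rota-Baxter algebra separately: first that $(\g,[\cdot,\cdot]_\alpha^i,\alpha)$ is a Hom-Lie algebra, and then that $R$ remains a Rota-Baxter operator of weight $\theta$ for the twisted bracket. The Hom-Lie part is entirely analogous to the centroid construction of Hom-Lie algebras and mirrors the associative centroid proposition proved just above, so the genuinely new content is the Rota-Baxter compatibility. Throughout I would use the basic consequence of $\alpha\in Cent(\g)$ together with skew-symmetry, namely $\alpha[x,y]=[\alpha(x),y]=[x,\alpha(y)]$, and hence $\alpha^2[x,y]=[\alpha(x),\alpha(y)]$.

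For the Hom-Lie structure I would first check skew-symmetry. For the first bracket, $[x,y]_\alpha^1=[\alpha(x),y]=\alpha[x,y]=-\alpha[y,x]=-[\alpha(y),x]=-[y,x]_\alpha^1$, and for the second bracket skew-symmetry is immediate from that of $[\cdot,\cdot]$. For the Hom-Jacobi identity I would expand a single cyclic term: using the centroid identity repeatedly to pull every $\alpha$ out of the brackets, one finds $[\alpha(x),[y,z]_\alpha^1]_\alpha^1=\alpha^3[x,[y,z]]$ and $[\alpha(x),[y,z]_\alpha^2]_\alpha^2=\alpha^5[x,[y,z]]$. Summing over the cyclic permutations and factoring out the common power of $\alpha$, the bracketed sum is exactly $\circlearrowleft_{x,y,z}[x,[y,z]]$, which vanishes by the Jacobi identity of $(\g,[\cdot,\cdot])$; hence $\circlearrowleft_{x,y,z}[\alpha(x),[y,z]_\alpha^i]_\alpha^i=0$ in both cases.

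The core of the proof is the Rota-Baxter identity. For the first bracket I would compute $[R(x),R(y)]_\alpha^1=[\alpha(R(x)),R(y)]=\alpha[R(x),R(y)]$ by the centroid property, then apply the classical Rota-Baxter identity to $[R(x),R(y)]$ and use $\alpha R=R\alpha$ to move $\alpha$ through $R$, obtaining $R\bigl(\alpha[R(x),y]+\alpha[x,R(y)]+\theta\alpha[x,y]\bigr)$. Reinterpreting each term via the centroid identity, e.g. $\alpha[R(x),y]=[\alpha(R(x)),y]=[R(x),y]_\alpha^1$, gives precisely $R\bigl([R(x),y]_\alpha^1+[x,R(y)]_\alpha^1+\theta[x,y]_\alpha^1\bigr)$. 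For the second bracket the same scheme applies with $\alpha^2$ in place of $\alpha$: $[R(x),R(y)]_\alpha^2=\alpha^2[R(x),R(y)]$, and after invoking the Rota-Baxter identity and commuting $\alpha^2$ through $R$ one rewrites $\alpha^2[R(x),y]=[\alpha(R(x)),\alpha(y)]=[R(x),y]_\alpha^2$, and similarly for the other two terms, yielding the Rota-Baxter identity for $[\cdot,\cdot]_\alpha^2$.

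I expect the only real obstacle to be careful bookkeeping: keeping track of how many copies of $\alpha$ accumulate when pulling the centroid map out of nested brackets, and making sure the commutation hypothesis $\alpha R=R\alpha$ is invoked at exactly the step where $\alpha$ (or $\alpha^2$) must pass through $R$. No step requires anything beyond skew-symmetry, the Jacobi identity, the centroid relation, and the classical Rota-Baxter identity, so once the algebra of $\alpha$-powers is organized the verification is routine.
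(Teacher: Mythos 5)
Your proof is correct, and its core --- the Rota-Baxter verification --- is essentially the paper's computation: pull the twist out of the bracket via the centroid relation ($\alpha$ for $[\cdot,\cdot]_\alpha^1$, $\alpha^2$ for $[\cdot,\cdot]_\alpha^2$), apply the classical Rota-Baxter identity, commute the power of $\alpha$ through $R$, and re-absorb it term by term. Two differences are worth noting. First, for the Hom-Lie structure the paper gives no argument at all: it cites Proposition~1.12 of Benayadi--Makhlouf, whereas you verify skew-symmetry and the Hom-Jacobi identity directly; your bookkeeping $[\alpha(x),[y,z]_\alpha^1]_\alpha^1=\alpha^3[x,[y,z]]$ and $[\alpha(x),[y,z]_\alpha^2]_\alpha^2=\alpha^5[x,[y,z]]$ is correct (each step uses the centroid relation on the left argument, or skew-symmetry to move $\alpha$ across), so your write-up is self-contained where the paper's is not. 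Second, for the second bracket the paper never isolates the identity $[\alpha(x),\alpha(y)]=\alpha^2[x,y]$; instead it repeatedly flips brackets by skew-symmetry to apply the one-sided centroid relation, which produces a chain of sign changes such as $[\alpha(R(x)),\alpha(R(y))]=-\alpha^2([R(y),R(x)])=\alpha^2([R(x),R(y)])$. Your preliminary observation that the centroid relation plus skew-symmetry give $\alpha[x,y]=[x,\alpha(y)]$, hence $\alpha^2[x,y]=[\alpha(x),\alpha(y)]$, removes those sign manipulations and makes the bracket-$2$ computation a verbatim copy of the bracket-$1$ one with $\alpha$ replaced by $\alpha^2$. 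Both routes prove the same statement; yours is marginally cleaner and fully self-contained, while the paper's is shorter on the page because it outsources the Hom-Lie part to a citation.
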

\begin{proof}
 The triple $(\mathfrak{g},[\cdot, \cdot ]_\alpha^1,\alpha,R )$ and $(\mathfrak{g},[\cdot, \cdot]_\alpha^2,\alpha ,R)$ are Hom-Lie algebras  according to \cite[Proposition  1.12]{BenayadiMakhlouf}.

They are  also  Rota-Baxter algebras since
\begin{align*}
[R(x), R(y)]_\alpha^1&=[\alpha(R(x)), R(y)]=\alpha([R(x), R(y)]),\\
\ &=\alpha(R([x, R(y)]+[R(x), y]+\theta [x, y])),\\
\ &=R([\alpha(x), R(y)]+[\alpha(R(x)), y]+\theta [\alpha(x), y]),\\
\ &=R([x, R(y)]_\alpha^1+[R(x), y]_\alpha^1+\theta [x, y]_\alpha^1).\\
\end{align*}
and
\begin{align*}
[R(x), R(y)]_\alpha^2&=[\alpha(R(x)), \alpha(R(y))]=\alpha([R(x), \alpha(R(y))])=-\alpha^2([R(y),R(x)])=\alpha^2([R(x),R(y)]),\\
\ &=\alpha^2(R([x, R(y)]+[R(x), y]+\theta [x, y])),\\
\ &=R(\alpha([\alpha(x), R(y)])+\alpha([\alpha(R(x)), y])+\theta \alpha([\alpha(x), y])),\\
\ &=-R(\alpha([ R(y),\alpha(x)])+\alpha([ y,\alpha(R(x))])+\theta \alpha([ y,\alpha(x)])),\\
\ &=-R([ \alpha(R(y)),\alpha(x)]+[\alpha( y),\alpha(R(x))]+\theta [ \alpha(y),\alpha(x)]),\\
\ &=R([x, R(y)]_\alpha^2+[R(x), y]_\alpha^2+\theta [x, y]_\alpha^2).\\
\end{align*}
\end{proof}

%\begin{remark}
%We have similar results for commutative  associative Rota-Baxter algebras or or skewsymmetric associative Rota-Baxter algebras.
%\end{remark}

We may obtain similar connections to  Hom-preLie and Hom-dendriform algebras as for Hom-associative algebras for example we have
\begin{proposition}
Let $( A,[\ ,\ ], \alpha,R) $ be a  Hom-Lie Rota-Baxter algebra where $R$ is  a Rota-Baxter operator  of weight $0$. Assume that $\alpha$ and $R$ commute. We define the operation $\ast$  on $A$ by
\begin{equation}
x\ast y=[R(x), y].
\end{equation}
Then $( A, \ast, \alpha) $ is a Hom-preLie algebra.
\end{proposition}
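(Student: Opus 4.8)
The plan is to verify the left Hom-preLie identity directly by expanding both sides of the defining equation in terms of the underlying Hom-Lie bracket and the Rota-Baxter operator $R$, then cancel terms using the Hom-Jacobi identity together with the weight-$0$ Rota-Baxter relation \eqref{RBoperatorNonAss}. The left Hom-preLie condition to establish is
\begin{equation*}
\alpha(x)\ast(y\ast z)-(x\ast y)\ast\alpha(z)=\alpha(y)\ast(x\ast z)-(y\ast x)\ast\alpha(z),
\end{equation*}
where $x\ast y=[R(x),y]$. First I would compute $\alpha(x)\ast(y\ast z)=[R(\alpha(x)),[R(y),z]]$ and, using that $\alpha$ and $R$ commute together with the multiplicativity $R(\alpha(x))=\alpha(R(x))$, rewrite this as $[\alpha(R(x)),[R(y),z]]$. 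Similarly $(x\ast y)\ast\alpha(z)=[R([R(x),y]),\alpha(z)]$; here the key move is to invoke the weight-$0$ Rota-Baxter identity $[R(x),R(y)]=R([R(x),y]+[x,R(y)])$ to re-express $R([R(x),y])$ in a form that pairs cleanly against the other terms.

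The main structural observation is that the expression $\alpha(x)\ast(y\ast z)-\alpha(y)\ast(x\ast z)$ is, after the substitutions above, a difference of double brackets of the form $[\alpha(R(x)),[R(y),z]]-[\alpha(R(y)),[R(x),z]]$, which by the Hom-Jacobi identity applied to the triple $(R(x),R(y),z)$ equals $[\alpha(z),[R(x),R(y)]]$ up to sign, i.e. a single nested bracket involving $[R(x),R(y)]$. On the other side, the terms $(x\ast y)\ast\alpha(z)-(y\ast x)\ast\alpha(z)$ combine to $[R([R(x),y])-R([R(y),x]),\alpha(z)]$, and expanding $R([R(x),y])-R([R(y),x])$ via \eqref{RBoperatorNonAss} at weight $0$ should reproduce exactly $[R(x),R(y)]$ (modulo the terms $R([x,R(y)])$ and $R([y,R(x)])$, which cancel against each other by skew-symmetry). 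Thus both sides reduce to the same nested bracket $[[R(x),R(y)],\alpha(z)]$, and the identity closes.

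Concretely, the key steps in order are: (i) expand all four $\ast$-products into double Hom-Lie brackets, pushing $R$ and $\alpha$ through using their commutation and the multiplicativity of $\alpha$; (ii) group the two terms with $\alpha(z)$ on the outside and apply the Rota-Baxter relation \eqref{RBoperatorNonAss} to turn $R$ of a bracket into $[R(x),R(y)]$-type terms, discarding the pieces that cancel by skew-symmetry; (iii) group the remaining terms and apply the Hom-Jacobi identity to collapse them to the same expression. I expect the bookkeeping in step (i)--(ii) to be the main obstacle rather than any conceptual difficulty: one must keep careful track of the placement of $\alpha$ versus $R$ and of the signs introduced by skew-symmetry, since a single misplaced twist breaks the cancellation. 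The weight-$0$ hypothesis is essential because the $\theta[x,y]$ correction term in \eqref{RBoperatorNonAss} would otherwise survive and obstruct the collapse; this parallels the weight-$0$ associative case treated in Proposition \ref{HAssRBtoHpLie}, and indeed the cleanest write-up would mirror that proof line by line, replacing the associative product $\cdot$ by the bracket $[\ ,\ ]$ throughout.
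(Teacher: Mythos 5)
Your proof is correct and follows exactly the route the paper intends: the paper states this proposition without its own proof, presenting it as the bracket analogue of the Hom-associative case (Proposition \ref{HAssRBtoHpLie}), and your verification is precisely that argument transposed to brackets — both sides of the left Hom-preLie identity collapse to $[[R(x),R(y)],\alpha(z)]$, the terms $(x\ast y)\ast\alpha(z)-(y\ast x)\ast\alpha(z)$ via skew-symmetry plus the weight-$0$ identity \eqref{RBoperatorNonAss}, and the terms $\alpha(x)\ast(y\ast z)-\alpha(y)\ast(x\ast z)$ via the Hom-Jacobi identity applied to $(R(x),R(y),z)$. Two wording slips that do not affect correctness: $R(\alpha(x))=\alpha(R(x))$ is just the assumed commutation of $\alpha$ and $R$ (no multiplicativity of $\alpha$ is needed anywhere), and the terms $R([x,R(y)])$ and $R([y,R(x)])$ do not ``cancel against each other'' — rather skew-symmetry rewrites $-R([R(y),x])$ as $R([x,R(y)])$, after which \eqref{RBoperatorNonAss} gives $R([R(x),y])-R([R(y),x])=R([R(x),y]+[x,R(y)])=[R(x),R(y)]$ in a single step.
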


\begin{remark}
The connection between  Rota-Baxter Hom-algebras and Yang-Baxter equation  will be developed in a forthcoming paper, as well as free Hom-associative Rota-Baxter algebra.
\end{remark}

\bibliographystyle{amsplain}
%\bibliography{global}
\providecommand{\bysame}{\leavevmode\hbox to3em{\hrulefill}\thinspace}
\providecommand{\MR}{\relax\ifhmode\unskip\space\fi MR }
% \MRhref is called by the amsart/book/proc definition of \MR.
\providecommand{\MRhref}[2]{%
  \href{http://www.ams.org/mathscinet-getitem?mr=#1}{#2}
}
\providecommand{\href}[2]{#2}

\end{document}